\newtheorem{theorem}{Theorem}[section]
\newtheorem{lem}[theorem]{Lemma}
\newtheorem{cor}[theorem]{Corollary}
\newtheorem{rem}[theorem]{Remark}
\numberwithin{equation}{section}
\renewcommand{\a}{\alpha}
\renewcommand{\b}{\beta}
\newcommand{\g}{\gamma}
\renewcommand{\d}{\delta}
\newcommand{\e}{\varepsilon}
\renewcommand{\l}{\lambda}
\renewcommand{\o}{\omega}
\renewcommand{\O}{\Omega}
\newcommand{\A}{\mathcal{A}}
\newcommand{\E}{\mathbb{E}}
\renewcommand{\L}{\mathcal{L}}
\newcommand{\N}{\mathbb{N}}
\renewcommand{\P}{\mathbb{P}}
\newcommand{\R}{{\mathbb R}}
\newcommand{\s}{\mathfrak{S}}
\newcommand{\T}{{\mathbb T}}
\newcommand{\Z}{\mathbb{Z}}
\renewcommand{\leq}{\leqslant}
\renewcommand{\geq}{\geqslant}
\begin{document}

\title{The stochasticity parameter of quadratic residues}
\author{Mikhail R. Gabdullin} 
\date{}
\address{Steklov Mathematical Institute,
Gubkina str., 8, Moscow, Russia, 119991}
\email{gabdullin.mikhail@yandex.ru, gabdullin@mi-ras.ru} 

\begin{abstract}
Following V. I. Arnold, we define the stochasticity parameter $S(U)$ of a subset $U$ of $\Z/M\Z$ to be the sum of squares of the consecutive distances between elements of $U$. In this paper we study the stochasticity parameter of the set $R_M$ of quadratic residues modulo $M$. We present a method which allows to find the asymptotics of $S(R_M)$ for a set of $M$ of positive density. In particular, we obtain the following two corollaries. Denote by $s(k)=s(k,\Z/M\Z)$ the average value of $S(U)$ over all subsets $U\subseteq \Z/M\Z$ of size $k$, which can be thought of as the stochasticity parameter of a random set of size $k$. Let $\s(R_M)=S(R_M)/s(|R_M|)$. We show that 

a) $\varliminf_{M\to\infty}\s(R_M)<1<\varlimsup_{M\to\infty}\s(R_M)$;

\smallskip

b) the set $\{ M\in \N: \s(R_M)<1 \}$ has positive lower density. 
\end{abstract}

\date{\today}
\maketitle

\section{Introduction}

For a positive integer $n\geq2$, let $U=\{0\leq u_1<u_2<...<u_k<n \}$ be a subset of size $k\geq2$ of the circle $\T_n=\R/n\Z$ or of the cyclic group $\Z/n\Z$. For $i=1,...,k-1$, we denote by $s_i=u_{i+1}-u_i\in\R^+$ the consecutive distances between elements of $U$, and write $s_k=u_1+n-u_k\in\R^+$. Following Arnold \cite[\S9]{Arn}, we define the stochasticity parameter $S(U)$ of the set $U$ by
$$
S(U)=\sum_{i=1}^{k}s_i^2.
$$
While we study this parameter only in the discrete case of $\Z/n\Z$, we would like to  focus for a moment on the continuous case of $\T_n$, which is useful for some heuristic in what follows. Since $\frac{n}{k} =\frac{1}{k}\sum_{i=1}^ks_i\leq \left(\frac{1}{k}\sum_{i=1}^ks_i^2\right)^{1/2}$ and $\sum_{i=1}^ks_i^2 < \left(\sum_{i=1}^ks_i\right)^2=n^2$, we see that 
$$ \min_{|U|=k} S(U)= \frac{n^2}{k}
$$
(the minimum is attained when all $s_i$ are equal to $n/k$) and
$$\sup_{|U|=k} S(U)=n^2
$$
($S(U)$ is close to this value when $U$ is contained in a small interval). Now let $U$ be a random subset of $\T_n$ of size $k$ (we think of it as a random point of $\{(s_1,...,s_k)\in[0,n]^k: \sum_{i=1}^ks_i=n\}$ endowed with the normalized Lebesgue measure).  
It is not hard to show that $\P\left(s_i>t\right)=\P\left(s_1>t\right) = \left(\frac{n-t}{n}\right)^{k-1}$ for each $i$ and $t\in(0,n)$, and we thus have
$$
\E s_i=\int_0^n\P(s_i>t)dt=n/k, \quad \E s_i^2=\int_0^n2t\,\P(s_i>t)dt=\frac{2n^2}{k(k+1)}\,.
$$ 
It follows that
\begin{equation}\label{1.1}
\E S(U)=\frac{2n^2}{k+1}
\end{equation}
and
\begin{equation}\label{1.2}
\P(s_i>t\E s_i) = (1-t/k)^{k-1} =  e^{-t} (1+o(1)), \quad k\to\infty,	
\end{equation}
uniformly for $0\leq t \leq t_0$ for any fixed $t_0>0$, and so for large $k$ the normalized gaps $s_i/\E s_i$ have the exponential distribution with parameter $1$. 
 
Now let $U$ be a subset of a cyclic group $\Z/n\Z$ of a fixed size $k$. Again, it is easy to show that $S(U)$ is minimal when the points of $U$ are nearly equidistributed, and is maximal when $U$ is an interval; thus, a too small or a too large value of $S(U)$ is indicative of nonrandom behavior of $U$. Having noted these extreme cases, we are now interested in comparing $S(U)$ with the average $s(k,\Z/n\Z)$ of the stochasticity parameter taken over all subsets of $\Z/n\Z$ of size $k$, which can be interpreted as the stochasticity parameter of a random set of size $k$. In what follows we write $s(k)$ instead of $s(k,\Z/n\Z)$ for brevity, since the ambient group will always be clear from context; we also mention here the discrete analog of (\ref{1.1}):
\begin{equation}\label{1.3} 
s(k)=\frac{n(2n-k+1)}{k+1};
\end{equation}
note that this agrees with (\ref{1.1}) in the sense that  $s(k)\sim \frac{2n^2}{k+1}$ whenever $k=o(n)$.  

It is worth mentioning that several classical papers were devoted to the stochasticity parameter of the set $\mathfrak{A}_n=\{1=a_1<...<a_{\varphi(n)}=n-1: a_i \mbox{ coprime to } n \}$ of reduced residues modulo $n$. Firstly, Erd\H{o}s \cite{Erd} conjectured that
$$
S(\mathfrak{A}_n) \ll \frac{n^2}{\varphi(n)}
$$
(note that the right-hand side is of order $s(|\mathfrak{A}_n|)$). A more general question was studied by Hooley \cite{Hoo1} (see also \cite{Hoo2, Hoo3}), who showed that 
\begin{equation}\label{1.4}
\sum_{i=1}^{\varphi(n)}(a_{i+1}-a_i)^{\l} \ll n^{\l}(\varphi(n))^{1-\l}
\end{equation}
for $0<\l<2$, and proved the weaker result 
$$
S(\mathfrak{A}_n) \ll n\log\log n
$$
in the case $\l=2$. Montgomery and Vaughan \cite[Corollary 1]{MV} established (\ref{1.4}) for all $\l>0$, succeeding in proving Erd\H{o}s' conjecture. Their key ingredient (and actually their main theorem) is the bound 
\begin{equation}\label{1.5}
\mathcal{X}_q(\mathfrak{A}_n; h):= \sum_{l=1}^n\left(\sum_{\substack{m=1 \\(l+m,n)=1}}^h 1 - h\frac{\varphi(n)}{n}\right)^q	\ll n\left(\frac{h\varphi(n)}{n}\right)^{q/2}+h\varphi(n),
\end{equation}
where $q$ is a fixed positive integer. The mentioned result of Hooley also relies on the corresponding bound for $\mathcal{X}_2(\mathfrak{A}_n;h)$, which is equal to the variance of the random variable
$$
\#\{1\leq m\leq h: (l+m,n)=1 \}
$$
(with fixed $h$ and $l$ drawn uniformly at random from $\Z/n\Z$).

In this work we study the stochasticity parameter of the set 
$$
R_M=\{a^2: a\in\Z/M\Z \}
$$ 
of quadratic residues modulo $M$. It has multiplicative structure, and, as one may expect, there are much evidence for its random behavior as an additive set: say, in the case of prime modulus $p$, the probability that a random element of $\Z/p\Z$ is a quadratic residue is close to $1/2$, and it can be shown (see Lemma \ref{lem2.2} below) that for any arbitrary disjoint subsets $C_1$ and $C_2$ of $\Z/p\Z$ the events $x+c_1\in R_p$ and $x+c_2\notin R_p$, $c_1\in C_1$, $c_2\in C_2$, are nearly independent, provided that $|C_1|+|C_2|\ll \log p$ (in particular, $R_p$ contains any additive configurations of length $\ll\log p$). The last phenomenon also comes up in the case of square-free moduli; see Lemma \ref{lem2.3}. In the general case, the main evidence for random behavior of $R_M$ is the following. Write $R_M=\{0=r_1<r_2<...<r_{|R_M|}\}$, set $r_{|R_M|+1}=M$, and take an index $i$ randomly and uniformly in $\{1,...,|R_M|\}$; then on average we have $\E(r_{i+1}-r_i)=\frac{M}{|R_M|}$. The limit distribution of the distances $r_{i+1}-r_i$ was found by Kurlberg and Rudnick  (see \cite{KR} and \cite{Kur}); they showed that (here and in what follows $\o(M)$ stands for the number of prime divisors of $M$)
\begin{equation}\label{1.6} 
\P\left(r_{i+1}-r_i>t\frac{M}{|R_M|}\right)=e^{-t}(1+o(1)), \qquad  \o(M)\to\infty,
\end{equation}
uniformly in the range $0\leq t\leq t_0$ for any fixed $t_0$, which is exactly the distribution (\ref{1.2}) of gaps for a random set. This result supports the conjecture that, as $\o(M)\to\infty$,
\begin{multline*} 
S(R_M)=\frac{M^2}{|R_M|}\E\left(\frac{r_{i+1}-r_i}{M/|R_M|}\right)^2 \\
=\frac{M^2}{|R_M|}\int_0^{\infty}2t\,\P\left(r_{i+1}-r_i>t\frac{M}{|R_M|}\right)dt
\sim \frac{M^2}{|R_M|}\int_0^{\infty}2te^{-t}dt=\frac{2M^2}{|R_M|};
\end{multline*}
however, to make this rigorous we need good upper bounds for the contribution of large gaps between residues. On the other hand, if $\o(M)\to\infty$, then $|R_M|\to\infty$ and $M/|R_M|\to\infty$, and we have by (\ref{1.3}) 
$$
s(|R_M|)=M\frac{2M-|R_M|+1}{|R_M|+1}\sim\frac{2M^2}{|R_M|}, \quad \o(M)\to\infty,
$$	
as one may expect. It will often be convenient for us to describe the behavior of $S(R_M)$ in terms of the quantity 
\begin{equation*}
\s(R_M)=\frac{S(R_M)}{s(|R_M|)}.	
\end{equation*}
We note that $s(|R_M|)$ turns out to be a better approximation for $S(R_M)$ than $\frac{2M^2}{|R_M|}$, since it also covers the case of a prime modulus: Garaev, Konyagin, and Malykhin \cite{GKM} proved that 
\begin{equation}\label{1.7} 
\s(R_p)= 1+o(1), \quad p\to\infty,
\end{equation}
but it is easy to compute that $s(|R_p|)=p\frac{2p-(p+1)/2+1}{(p+1)/2+1}\sim 3p$, whereas $\frac{2p^2}{|R_p|}=\frac{2p^2}{(p+1)/2}\sim 4p$ (and so $S(R_M)$ is not asymptotically equivalent to $\frac{2M^2}{|R_M|}$ in general).

We present a method which allows us to find the asymptotics of $S(R_M)$ for a set of $M$ of positive density\footnote{Here and in what follows we consider the asymptotic density of a set $A\subseteq\N$, which is defined as $\lim_{N\to\infty}\frac{|A\cap\{1,\ldots,N\}|}{N}$. The lower asymptotic density (which appears only in Corollary \ref{cor1.3}) is defined similarly but with $\varliminf$ instead of $\lim$.}. Let $C_0>2$ be a sufficiently large absolute constant and let $P^-(n)$ denote the smallest prime divisor of $n$. We consider the set $\O$ of sufficiently large positive integers $M$ such that

\begin{itemize}
\item[(i)] $M$ is square-free;
	
\item[(ii)] $0.9\log\log M < \o(M) < 1.1\log\log M$;	
	
\item[(iii)] $M$ has the representation of the form $M=Am$, where $3\leq A \leq (\log M)^{0.1}$ and $P^-(m)>(\log M)^{C_0}$ (note that this representation is unique).
\end{itemize}
Standard sieve methods imply that the density of the set $\O$ is equal to $e^{-\g}\frac{3}{5\pi^2C_0}$, where $\gamma$ is the Euler constant; we will prove this in Section \ref{Sec6}. 	

Our main result is the following.

\begin{theorem}\label{th1.1}
For $M\in \O$, we have
$$
S(R_M)= \frac{2M^2}{|R_M|} - MA|R_A|^{-1}+O(M(\log M)^{-0.4}).
$$
\end{theorem}

Note that (\ref{1.3}) implies $M^{-1}s(k)=\frac{2M-k+1}{k+1}=\frac{2(M+1)}{k+1}-1=
\frac{2M}{k}-1+O\left(\frac{M}{k^2}\right)$.
Applying this to $M\in\O$, we obtain
\begin{equation*}
s(|R_M|)=\frac{2M^2}{|R_M|}-M+O((\log M)^2).
\end{equation*}
Since $M|R_M|^{-1}$ is close to $A|R_A|^{-1}2^{\o(M)}$ and $|R_A|<A$ for any $A\geq3$,  Theorem \ref{th1.1} implies the following two corollaries, the first of which extends (\ref{1.7}).

\begin{cor}\label{cor1.2} 
For $M\in\O$, we have
$$
\s(R_M)=1+o(1), \quad M\to\infty.
$$	
\end{cor}

\begin{cor}\label{cor1.3}
The set $\{ M\in \N: \s(R_M)<1 \}$ has positive lower density.
\end{cor}

Let us denote by $\delta_0$ the lower density of the set $\{M\in\N: \s(R_M)<1\}$; it follows that $\d_0\geq e^{-\g}\frac{3}{5\pi^2C_0}$. One could extract the exact value of $C_0$ from our argument and thus obtain an explicit lower bound for $\delta_0$, but this bound would be very poor (mainly because of the estimates from Section \ref{Sec4}). It seems to be hard even to show that $\delta_0\geq 0.1$ using our method; thus, we did not try to optimize our lower bounds for $\d_0$ and keep the paper relatively short. We also note that, while proving Theorem \ref{th1.1}, one can probably avoid the condition (i) in the definition of the set $\O$, but it simplifies our argument and added for convenience. Further, due to Hardy and Ramanujan \cite{HR}, (ii) holds for almost all integers (that is, for a set of density $1$), and thus we can assume it without affecting the density of $\O$. The third condition (iii) is crucial for us: we will need it for making some error terms small, and anyway it occurs in different steps of our proof. Finally, we note that our method seems to be incapable of proving that the inequality $\s(R_M)>1$ holds for a large set of $M$; nevertheless, we believe that the set of such $M$ also has positive density.

In the proof of Theorem \ref{th1.1}, we rewrite $S(R_M)$ in terms of the following function $f_A(y)$. Let $s_1,...,s_{|R_A|}$ be the distances between consecutive quadratic residues modulo $A$; we consider the indices of $s_i$ as elements of $\Z/|R_A|\Z$ rather than integers. For $k=0,...,|R_A|-1$, we set 
$\a_k=\sum_{i=1}^{|R_A|}\left(\sum_{l=i}^{i+k}s_l\right)^2$. In particular,  $\a_0=\sum_{i=1}^{|R_A|}s_i^2=S(R_A)$ and $\a_{|R_A|-1}=A^2|R_A|$. Now we define
$$
f_A(y)=\frac{F_A(y)}{Q_A(y)}, 
$$
where $Q_A(y)=1+y+\ldots+y^{|R_A|-1}$ and $F_A(y)=\sum_{k=0}^{|R_A|}\b_ky^k$ is the reciprocal polynomial with the coefficients $\b_0=\b_{|R_A|}=\sum_is_i^2$ and $\b_k=2\sum_{i=1}^{|R_A|}s_is_{i+k}$ 
for $0<k<|R_A|$. 

Another situation in which our method gives a good approximation of $S(R_M)$ is the following ``perturbed'' version of (\ref{1.7}). We consider the moduli of the form $M=Ap$, where $p$ is a prime and $A$ is small enough in terms of $p$; it turns out that the asymptotic behavior of $S(R_M)$ depends substantially on this additional factor. 

\begin{theorem}\label{th1.4}
Let $M=Ap$, where $p$ is a prime and $A$ is square-free with $A\leq p^{0.01}$. Then
$$
S(R_M)=2f_A(0.5)p + O(A^4p^{0.95}).
$$
\end{theorem}

It can be easily seen that $f_A(0.5)\gg F_A(0.5) \geq \b_0=S(R_A) \geq A$, and thus the error term in Theorem \ref{th1.4} is small enough.

One may wonder whether (\ref{1.7}) is still true if we consider all positive integers instead of primes. Theorem \ref{th1.4} gives the negative answer for this question. Indeed, using (\ref{1.3}) for the same $M$, we find  
\begin{equation*}\label{rand2}
s(|R_M|)=\left(\frac{4A^2}{|R_A|}-A\right)p+O(A^2|R_A|^{-1});
\end{equation*}
thus, it is enough to compare $2f_A(0.5)$ and $\frac{4A^2}{|R_A|}-A$ and find two appropriate values of $A$ which give us both possible inequalities. Numerical computations\footnote{See https://github.com/fractalon/arXiv2010.04982\_appendix. The author is grateful to Konstantin Olmezov for this code.} show that $2f_A(0.5)<\frac{4A^2}{|R_A|}-A$ for all $3\leq A\leq 100$, $A\neq89$, and
$2f_A(0.5)>\frac{4A^2}{|R_A|}-A$ for $A=89$. This yields 

\begin{cor}\label{cor1.5}
We have
$$
\varliminf_{M\to\infty}\s(R_M)<1<\varlimsup_{M\to\infty}\s(R_M).
$$	
\end{cor}

We stress that in the above situation $\o(M)$ remains bounded. However, as was mentioned after (\ref{1.6}), it is natural to conjecture that
$$ 
\s(R_M)=1+o(1), \quad \o(M)\to\infty. 
$$
Note that the trivial lower bound $S(R_M)\geq\frac{M^2}{|R_M|}$ (obtained from Cauchy-Schwarz), together with (\ref{1.3}), implies that $\varliminf_{M\to\infty}\s(R_M)\geq0.5$; on the other hand, we do not know if $\varlimsup_{M\to\infty}\s(R_M)$ is finite. The recent result of Aryan \cite[Corollary 1.1]{Ary} states that, for square-free $M$,
\begin{equation}\label{1.8}
S(R_M)\ll M2^{\o(M)}\log M \prod_{p|M}\left(1+\frac{1}{\sqrt{p}}\right)\left(1-\frac1p\right).
\end{equation}

A classical way to estimate $S(R_M)$ is to study
$$
\mathcal{X}_2(R_M; h):=\sum_{n=1}^M\left(\sum_{\substack{m=1 \\ n+m\in R_M}}^h 1 - h\frac{|R_M|}{M} \right)^2
$$
(which is the analog of (\ref{1.5}) for $R_M$). In \cite{Ary}, some lower and upper bounds for $\mathcal{X}_2(R_M; h)$ are obtained in the case of square-free $M$; the upper bound implies (\ref{1.8}) similarly to the proof of Corollary 1 from \cite{MV}. This approach may be useful to answer the aforementioned question of whether $\s(R_M)$ is bounded, but it does not yield to an asymptotic formula for $S(R_M)$ and thus we use another technique.

Let $K_l(M)$ be the number of gaps of length $l$ in $R_M$, that is, the number of elements $x\in R_M$ such that $x+l\in R_M$ but $x+j\notin R_M$ for $1\leq j\leq l-1$. To deal with $S(R_M)$, we write it as $\sum_{l\geq1}K_l(M)l^2$ and, choosing a parameter $D$, divide the range of summation into small ($l\leq D$) and large ($l>D$) gaps. For computing the contribution of small gaps in the setting of Theorem \ref{th1.1} one may use the limit distribution (\ref{1.6}) of the distances between quadratic residues. Nevertheless, in order to be able to compute the second term in the asymptotics for $S(R_M)$, one should avoid $o(1)$-terms which occur in (\ref{1.6}). Instead, we apply estimates of complete characters sums and obtain an asymptotic formula of $K_l(M)$ for small $l$. To bound $K_l(M)$ for large $l$, we use Burgess' estimates for character sums over short intervals.

%so we use another technique (and consider special moduli). 

It is worth mentioning that there are some further questions of interest in this area. For example, can we obtain good estimates for the average $\frac1X\sum_{M\leq X}S(R_M)$? What can be said about the density of the set $\{M\in\N: |\s(R_M)-1|>\d\}$?
We hope to make progress on these questions in a future paper.

In Sections \ref{Sec2}-\ref{Sec5} we will be simultaneously proving Theorems \ref{th1.1} and \ref{th1.4}. Section \ref{Sec2} is devoted to general results concerning additive configurations related to quadratic residues. In Section \ref{Sec3} we find the asymptotics for the contribution of small gaps into $S(R_M)$, and in Section \ref{Sec4} we estimate the contribution of large gaps. In Section \ref{Sec5} we finish the proofs of both Theorems \ref{th1.1} and \ref{th1.4}. Further, in Section \ref{Sec6} we compute the density of the set $\O$. Finally, in Section \ref{Sec7} we prove for completeness the identity (\ref{1.3}).

\section{Additive configurations related to quadratic residues}\label{Sec2} 

To prove our Theorems \ref{th1.1} and \ref{th1.4}, we will need estimates for the number of some additive configurations related to quadratic residues. Let $M$ be a positive integer; for disjoint subsets $C_1$ and $C_2$ of $\Z/M\Z$ we define the sets
$$
R_M(C_1)=\{x\in\Z/M\Z: x+c_1\in R_M \mbox{ for all $c_1\in C_1$} \},
$$
$$
N_M(C_2)=\{x\in\Z/M\Z: x+c_2\notin R_M   \mbox{ for all $c_2\in C_2$} \},
$$
and
$$
RN_M(C_1,C_2)=R_M(C_1)\cap N_M(C_2)
$$
(we do not exclude the cases where $C_1$ or $C_2$ is empty). The lemmas from this section provide bounds for these quantities in the cases of prime and square-free moduli. For $x\in\Z/M\Z$, we denote by $R_M(x)$ the indicator $1_{x\in R_M}$; the letter $\theta$ is used for a number bounded in magnitude by $1$, which can vary from line to line.

We will rely on the following classical estimate for complete character sums.

\begin{lem}\label{lem2.1} 
Let $p$ be a prime, $\chi_1,\ldots,\chi_r$ be characters modulo $p$ with some $\chi_i\neq\chi_0$, where $\chi_0$ is the principal character, and $a_1,\ldots,a_r$ be distinct elements of $\Z/p\Z$. Then
$$
\left|\sum_{x\in\Z/p\Z}\chi_1(x+a_1)\chi_2(x+a_2)\ldots\chi_r(x+a_r)\right|\leq rp^{1/2}.	
$$	
\end{lem}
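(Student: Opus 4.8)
The plan is to reduce the product of several characters to a single character applied to a polynomial and then invoke the Weil bound. First I would discard the principal characters: if $\chi_i=\chi_0$ for $i$ in some set $T$, then $\chi_0(x+a_i)=1$ for every $x\neq -a_i$, while the whole product vanishes at $x=-a_i$ (recall $\chi(0)=0$ for every character modulo $p$, including $\chi_0$). Hence deleting these factors changes the sum by a correction consisting of $|T|$ terms each of modulus at most $1$. By hypothesis the remaining set $S$ of indices with $\chi_i\neq\chi_0$ is nonempty.

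Next, fix a generator $\psi$ of the (cyclic) group of multiplicative characters modulo $p$, so that $\psi$ has order $p-1$, and write $\chi_i=\psi^{e_i}$ with $1\leq e_i\leq p-2$ for $i\in S$. Using multiplicativity of $\psi$ together with the convention $\psi(0)=0$, I would rewrite
\[
\prod_{i\in S}\chi_i(x+a_i)=\psi\!\left(\prod_{i\in S}(x+a_i)^{e_i}\right)=\psi(f(x)),\qquad f(x):=\prod_{i\in S}(x+a_i)^{e_i}.
\]
This identity holds at every $x\in\Z_p$: both sides vanish precisely when $x+a_i=0$ for some $i\in S$, and there is no $0^0$ ambiguity because each $e_i\geq 1$. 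Since the $a_i$ are distinct, $f$ has exactly $|S|$ distinct roots $-a_i$, with multiplicities $e_i$.

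The key point is to verify the nondegeneracy hypothesis of Weil's theorem: $f$ equals a constant times a $(p-1)$-st power in $\F_p[x]$ if and only if $p-1\mid e_i$ for every $i\in S$, which is impossible since $1\leq e_i\leq p-2$. Therefore the classical Weil bound for character sums (the Riemann Hypothesis for curves over finite fields) applies to $\sum_x\psi(f(x))$ and yields $\bigl|\sum_{x\in\Z_p}\psi(f(x))\bigr|\leq(|S|-1)\sqrt p$. Combining this with the discarding step and using $\sqrt p\geq 1$, I would conclude
\[
\Bigl|\sum_{x\in\Z_p}\chi_1(x+a_1)\cdots\chi_r(x+a_r)\Bigr|\leq(|S|-1)\sqrt p+|T|\leq(|S|+|T|-1)\sqrt p=(r-1)\sqrt p\leq r\sqrt p.
\]

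The only genuine analytic input is Weil's estimate, which I take as a black box; everything else is bookkeeping. The step most prone to error is the passage to a single character combined with the check that $f$ is not a perfect $(p-1)$-st power: this is exactly where the hypothesis that some $\chi_i\neq\chi_0$ is used, and where the convention $\chi(0)=0$ must be tracked carefully so that the identity $\prod_{i\in S}\chi_i(x+a_i)=\psi(f(x))$ remains valid at the roots of $f$.
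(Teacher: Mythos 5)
Your proof is correct. There is nothing in the paper to compare it against: the paper proves Lemma \ref{lem2.1} by citation alone (``See \cite{Joh}, Lemma 1''), and your argument is essentially the standard proof lying behind that reference. The three steps all check out. Discarding the principal characters costs at most $|T|$ under your convention $\chi_0(0)=0$ (under the alternative convention $\chi_0(0)=1$ it would cost nothing, so the bound is robust to the choice). Writing $\chi_i=\psi^{e_i}$ with $1\leq e_i\leq p-2$ for $i\in S$ and collapsing the product to $\psi(f(x))$ with $f(x)=\prod_{i\in S}(x+a_i)^{e_i}$ is a genuine identity at every $x\in\Z_p$, since each $e_i\geq1$ and $\psi(0)=0$ makes both sides vanish exactly at $x=-a_i$, $i\in S$. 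The nondegeneracy check is the right one and is precisely where both hypotheses enter: because the $a_i$ are distinct, unique factorization in $\F_p[x]$ shows $f=c\,g^{p-1}$ would force $(p-1)\mid e_i$ for all $i\in S$, impossible since $1\leq e_i\leq p-2$; this requires $S\neq\varnothing$, i.e.\ some $\chi_i\neq\chi_0$. Weil's bound then gives $(|S|-1)\sqrt p$ for the reduced sum, and your bookkeeping yields $(|S|-1)\sqrt p+|T|\leq(r-1)\sqrt p$, which is in fact marginally stronger than the stated $r\sqrt p$.
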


\begin{proof}
See \cite[Lemma 1]{Joh}.
\end{proof}

By a standard argument, one can obtain from here the asymptotic formula for $|RN_p(C_1,C_2)|$ provided that the sets $C_1$ and $C_2$ are small.

\begin{lem}\label{lem2.2}
Let $p$ be a prime. Then for any disjoint subsets $C_1$ and $C_2$ of $\Z/p\Z$,
\begin{equation*} 
|RN_p(C_1,C_2)|= p2^{-(|C_1|+|C_2|)}+0.5\theta(|C_1|+|C_2)|(p^{1/2}+1).
\end{equation*}
\end{lem}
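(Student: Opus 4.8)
The plan is to express the cardinality as a character sum and then separate a clean main term from two small error terms. Write $k_1=|C_1|$, $k_2=|C_2|$, and $k=k_1+k_2$, and let $\chi$ denote the quadratic character modulo $p$, so that $\chi\neq\chi_0$ for odd $p$ (the case $p=2$ being trivial). I would start from
$$|RN_p(C_1,C_2)|=\sum_{x\in\Z_p}\prod_{c\in C_1}R_p(x+c)\prod_{c\in C_2}\bigl(1-R_p(x+c)\bigr).$$
The key observation is that for $y\neq0$ one has $R_p(y)=\tfrac12(1+\chi(y))$ and $1-R_p(y)=\tfrac12(1-\chi(y))$, while at $y=0$ both formulas are off by exactly $\tfrac12$ (since $0\in R_p$ but $\chi(0)=0$). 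So I replace each indicator by its character surrogate and control the resulting discrepancy separately.

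Concretely, I introduce $g(x)=2^{-k}\prod_{c\in C_1}(1+\chi(x+c))\prod_{c\in C_2}(1-\chi(x+c))$ and first evaluate $\sum_x g(x)$. Expanding the product over subsets $S\subseteq C_1\cup C_2$ gives $g(x)=2^{-k}\sum_{S}(-1)^{|S\cap C_2|}\prod_{c\in S}\chi(x+c)$, hence
$$\sum_{x\in\Z_p}g(x)=2^{-k}\Bigl(p+\sum_{\emptyset\neq S\subseteq C_1\cup C_2}(-1)^{|S\cap C_2|}\sum_{x\in\Z_p}\prod_{c\in S}\chi(x+c)\Bigr).$$
The term $S=\emptyset$ contributes the main term $2^{-k}p$. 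For each nonempty $S$ the shifts $c\in S$ are distinct (as $C_1,C_2$ are disjoint and their elements distinct), so Lemma \ref{lem2.1} with all characters equal to $\chi$ gives $\bigl|\sum_x\prod_{c\in S}\chi(x+c)\bigr|\le|S|\sqrt p$. Since $\sum_{\emptyset\neq S}|S|=k2^{k-1}$, these terms contribute at most $2^{-k}\cdot k2^{k-1}\sqrt p=\tfrac12 k\sqrt p$ in absolute value, and thus $\sum_x g(x)=2^{-k}p+\tfrac12\theta k\sqrt p$.

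It remains to bound the difference between the true count and $\sum_x g(x)$. This difference is supported on the $k$ points $x=-c$ with $c\in C_1\cup C_2$, at each of which exactly one factor hits $y=0$ while the others agree with their surrogates. At such a point the surrogate factor equals $\tfrac12$, whereas the true indicator factor equals $1$ (if $c\in C_1$) or $0$ (if $c\in C_2$); a short case check shows the per-point discrepancy is at most $\tfrac12$ in absolute value, for a total error of at most $\tfrac12 k$. Combining the two error contributions yields $|RN_p(C_1,C_2)|=2^{-k}p+\tfrac12\theta k(\sqrt p+1)$, as claimed. I expect the only mildly delicate step to be this last bookkeeping at the $y=0$ points: it is precisely the fact that each contributes at most $\tfrac12$ rather than $1$ that produces the clean constant $0.5$ in the statement, whereas the character-sum estimate itself is immediate from Lemma \ref{lem2.1}.
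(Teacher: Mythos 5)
Your proposal is correct and follows essentially the same route as the paper: both expand the indicator product via the quadratic character, isolate the main term $2^{-k}p$, bound the $2^k-1$ character sums by Lemma \ref{lem2.1} using $\sum_{r=1}^{k}\binom{k}{r}r=k2^{k-1}$, and account separately for the $k$ points where some shift vanishes, each contributing at most $\tfrac12$. The only cosmetic difference is that the paper works with the exact identity $R_p(x)=\tfrac12(1+\chi(x)+1_p(x))$ and then drops the $1_p$-terms, whereas you define the character surrogate directly and bound the discrepancy at those points --- the same bookkeeping in different clothing.
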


\begin{proof} We use the method from \cite{Dav}. Let $\chi$ be the Legendre symbol modulo $p$ and $1_p(x)$ be the indicator $1_{x=0}$. Then $R_p(x)=\frac12(1+\chi(x)+1_p(x))$
and $1-R_p(x)=\frac12(1-\chi(x)-1_p(x))$. Since the functions $R_p(x)$, $1-R_p(x)$, and $\frac{1\pm\chi(x+c_i)}{2}$ are bounded in magnitude by $1$, we have
\begin{multline*}
|RN_p(C_1,C_2)|=\sum_{x\in\Z/p\Z}\prod_{c_1\in C_1}R_p(x+c_1)\prod_{c_2\in C_2}(1-R_p(x+c_2))=\\
\sum_{x\in\Z/p\Z}\prod_{c_1\in C_1}\frac{1+\chi(x+c_1)+1_p(x+c_1)}{2}\prod_{c_2\in C_2}\frac{1-\chi(x+c_2)-1_p(x+c_2)}{2}=\\
2^{-(|C_1|+|C_2|)}\sum_{x\in\Z/p\Z}\prod_{c_1\in C_1}(1+\chi(x+c_1))\prod_{c_2\in C_2}(1-\chi(x+c_2))+0.5\theta(|C_1|+|C_2|).
\end{multline*}
Denote $k=|C_1|+|C_2|$. Expanding the product in the last expression, we obtain the term $p$ and $2^k-1$ character sums; there are exactly ${k\choose r}$ of them of the type $\sum_{x\in\Z/p\Z}\prod_{j=1}^r\chi(x+a_j)$, each of which is bounded by $rp^{1/2}$ by Lemma \ref{lem2.1}. Since $\sum_{r=1}^k{k\choose r}r=k2^{k-1}$, we get
\begin{equation*}
2^k|RN_p(C_1,C_2)|=p+0.5\theta k2^k(p^{1/2}+1),
\end{equation*} 
and the claim follows.   
\end{proof}

Now we extend Lemma \ref{lem2.2} to the case of square-free moduli.  

\begin{lem}\label{lem2.3} 
Let $m$ be a square-free positive integer, $p_1=P^-(m)$ and $t=\o(m)$, and let $C_1$ and $C_2$ be disjoint subsets of nonnegative integers less than $p_1$, $|C_1|=s_1$, $|C_2|=s_2$. Then for all $r\in\N$ 
\begin{equation*} 
|RN_m(C_1,C_2)|=m2^{-s_1t}(1-2^{-t})^{s_2}+O(m2^{-s_1t}\left(E_1+E_2\right)),
\end{equation*}
where 
$$
E_1= \sum_{k=0}^{\min\{r,s_2\}}{s_2\choose k}2^{-kt}E(s_1+k),
$$
$$
E(\nu)=\sum_{d|m,d>1}\frac{\left(\nu2^{\nu}\right)^{\o(d)}}{d^{1/2}}, \quad 	E_2=\sum_{k=r}^{s_2}{s_2\choose k}2^{-kt}
$$
(we adopt the convention that $E_2=0$ for $r>s_2$).
\end{lem}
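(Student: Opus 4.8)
The plan is to reduce the square-free case to the prime case of Lemma~\ref{lem2.2} via the Chinese Remainder Theorem, and to treat the non-residue conditions by inclusion--exclusion, truncated at level $r$. Write $m=p_1\cdots p_t$ with $p_1=P^-(m)$, and for a set $D\subseteq\{0,1,\ldots,p_1-1\}$ let $|R_m(D)|=\#\{x\in\Z_m:\ x+c\in R_m\ \text{for all }c\in D\}$. Since $m$ is square-free, $x\in R_m$ if and only if $x\bmod p_j\in R_{p_j}$ for every $j$, so by the Chinese Remainder Theorem the count factorises as $|R_m(D)|=\prod_{j=1}^t|R_{p_j}(D)|$; this is legitimate because the elements of $D$ are distinct integers below $p_1\le p_j$ and hence remain distinct modulo each $p_j$. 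Expanding $\prod_{c_2\in C_2}\bigl(1-R_m(x+c_2)\bigr)$ and summing over $x$ gives the exact identity
\[
|RN_m(C_1,C_2)|=\sum_{S\subseteq C_2}(-1)^{|S|}|R_m(C_1\cup S)|=\sum_{k=0}^{s_2}(-1)^k T_k,\qquad T_k:=\!\!\sum_{\substack{S\subseteq C_2\\ |S|=k}}\!\!|R_m(C_1\cup S)|,
\]
where $|C_1\cup S|=s_1+k$ because $C_1$ and $C_2$ are disjoint.

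First I would establish a per-configuration estimate. Applying Lemma~\ref{lem2.2} with empty second set to each factor gives $|R_{p_j}(D)|=p_j2^{-\nu}+0.5\,\theta_j\,\nu(p_j^{1/2}+1)$ with $\nu=|D|$, that is $|R_{p_j}(D)|=p_j2^{-\nu}(1+\eta_j)$ with $|\eta_j|\le \nu 2^{\nu}p_j^{-1/2}$. Multiplying the $t$ factors, using $\prod_j p_j=m$, and expanding the resulting product over the square-free divisors of $m$, I obtain
\[
\Bigl|\,|R_m(D)|-m2^{-\nu t}\Bigr|\le m2^{-\nu t}\Bigl(\prod_{j=1}^t\bigl(1+\nu2^{\nu}p_j^{-1/2}\bigr)-1\Bigr)=m2^{-\nu t}E(\nu),\qquad \nu=|D|.
\]
Applied to $D=C_1\cup S$ with $|S|=k$ this yields $T_k=\binom{s_2}{k}m2^{-(s_1+k)t}\bigl(1+O(E(s_1+k))\bigr)$, and summing the main terms over all $k$ reproduces the claimed leading term $m2^{-s_1t}\sum_{k}(-1)^k\binom{s_2}{k}2^{-kt}=m2^{-s_1t}(1-2^{-t})^{s_2}$.

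The difficulty, and the reason for the parameter $r$, is that $E(\nu)$ grows rapidly with $\nu$ (through the factor $2^{\nu\omega(d)}$), so the per-term error is worthless once $k$ is large; I must therefore retain the fine estimate only for small $k$. Setting $u(x)=\#\{c\in C_2:\ x+c\in R_m\}$, the binomial identity $\sum_{k=0}^{r}(-1)^k\binom{u}{k}=(-1)^{r}\binom{u-1}{r}$ gives the Bonferroni bound $\bigl|\mathbb{I}(u=0)-\sum_{k=0}^{r}(-1)^k\binom{u}{k}\bigr|\le\binom{u}{r}$, and multiplying by $\mathbb{I}(x+c\in R_m\ \forall c\in C_1)$ and summing over $x$ yields
\[
\Bigl|\,|RN_m(C_1,C_2)|-\sum_{k=0}^{r}(-1)^k T_k\Bigr|\le T_r .
\]
Now the terms $T_0,\ldots,T_r$ are estimated by the per-configuration bound, contributing the partial main sum together with an error $O(m2^{-s_1t}E_1)$; the discrepancy between this partial sum and the full series $(1-2^{-t})^{s_2}$ is at most $\sum_{k>r}\binom{s_2}{k}2^{-kt}\le E_2$; and the truncation term satisfies $T_r\le\binom{s_2}{r}m2^{-(s_1+r)t}(1+E(s_1+r))$, whose two pieces are precisely the $k=r$ contributions of $m2^{-s_1t}E_2$ and $m2^{-s_1t}E_1$. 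Collecting these bounds gives the error $O\bigl(m2^{-s_1t}(E_1+E_2)\bigr)$; when $r>s_2$ the truncation is vacuous, $E_2=0$, and the statement reduces to the full inclusion--exclusion. I expect the only genuinely delicate points to be verifying that the multiplicative errors over the $t$ primes assemble exactly into the divisor sum $E(\nu)$, and arranging the truncation so that $E(\nu)$ is invoked only in the range $\nu\le s_1+r$.
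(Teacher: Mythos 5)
Your proposal is correct and follows essentially the same route as the paper: factor $|R_m(C_1\cup S)|$ over the primes of $m$ via CRT (using that the shifts are distinct modulo each prime since they lie below $p_1$), apply Lemma \ref{lem2.2} to each factor so that the accumulated multiplicative errors expand exactly into the divisor sum $E(\nu)$, and control the non-residue conditions by inclusion--exclusion truncated at level $r$ via Bonferroni. The only cosmetic difference is that you derive the Bonferroni bound directly from the identity $\sum_{k=0}^{r}(-1)^k\binom{u}{k}=(-1)^r\binom{u-1}{r}$ and truncate after the $k=r$ term, while the paper truncates after $k=r-1$ and cites the Bonferroni inequalities; the error bookkeeping is the same.
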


In practice, we will apply this lemma to situations where $m$ is the product of small amount of ``large'' primes; for such $m$ one can control the error terms. 

Note that the probability that a random element $x\in \Z/m\Z$ is a quadratic residue is about $2^{-t}$. The main term $m2^{-s_1t}(1-2^{-t})^{s_2}$ in this lemma agrees with this, also telling us that the events $x+c_1\in R_m$, $c_1\in C_1$, and $x+c_2\notin R_m$, $c_2\in C_2$, are nearly independent, provided that the error terms are small. 

\begin{proof} We may assume $r\leq s_2+1$. For a subset $C\subseteq C_2$, we write for brevity
$$
R_C:=R_m(C_1\cup C)=\{x\in\Z/m\Z: x+c\in R_m \mbox{ for all $c\in C_1\cup C$}\}.
$$
Note that $|RN_m(C_1,C_2)|=\left|R_{\varnothing}\setminus\bigcup_{c\in C_2}R_{\{c\}}\right|=|R_{\varnothing}|-|\bigcup_{c\in C_2}R_{\{c \}}|$. Hence by the inclusion-exclusion principle
\begin{equation}\label{2.2}
|RN_m(C_1,C_2)|=|R_{\varnothing}|-\sum_{|C|=1}|R_{C}|+\sum_{|C|=2}|R_C|-\ldots+(-1)^{s_2}|R_{C_2}|,
\end{equation}
and by the Bonferroni inequalities (see, for example, \cite[Ch.1, Exercise 1.1.3]{TV}) we have
\begin{equation}\label{2.3} 
|RN_m(C_1,C_2)|=|R_{\varnothing}|-\sum_{|C|=1}|R_C|+\sum_{|C|=2}|R_C|-\ldots+(-1)^{r-1}\sum_{|C|=r-1}|R_C|+O\left(\sum_{|C|=r}|R_C|\right)
\end{equation}
for all $r\leq s_2$.

Now we fix $C\subseteq C_2$ and work with $|R_C|$. By multiplicity and the assumption that elements of $C_1\bigsqcup C_2$ are less than $p_1$ (and, hence, are distinct modulo $p$ for any $p|m$), we can rewrite it as
\begin{equation*}
|R_C|=\prod_{p|m}|R_p(C_1\cup C)|.
\end{equation*}
Let $|C|=k$. By Lemma \ref{lem2.2}, $|R_p(C_1\cup C)|=p2^{-(s_1+k)}\left(1+\theta(s_1+k)2^{s_1+k}p^{-1/2}\right)$ for some $|\theta|\leq1$. Thus, we see that 
\begin{equation}\label{2.4} 
|R_C|=m2^{-(s_1+k)t}+O\left(m2^{-(s_1+k)t}E(s_1+k)\right),
\end{equation}
with $E(s_1+k)$ defined in the lemma.

We first consider the case $r=s_2+1$. By (\ref{2.2}) and (\ref{2.4}),
\begin{multline*}
2^{s_1t}m^{-1}|RN_m(C_1,C_2)|=1-{s_2\choose 1}2^{-t}+{s_2\choose 2}2^{-2t}+\ldots  +(-1)^{s_2}2^{-s_2t}\\+O\left(\sum_{k=0}^{s_2}{s_2\choose k}2^{-kt}E(s_1+k)\right)=
(1-2^{-t})^{s_2}+O(E_1), 
\end{multline*}
and the claim follows.

Now suppose that $r\leq s_2$. From (\ref{2.3}) and (\ref{2.4}) we obtain 
\begin{multline*}
2^{s_1t}m^{-1}RN_m(C_1,C_2)=1-{s_2\choose 1}2^{-t}+{s_2\choose 2}2^{-2t}+\ldots +(-1)^{r-1}{s_2\choose {r-1}}2^{-(r-1)t}\\
+O\left(\sum_{k=0}^{r}{s_2\choose k}2^{-kt}E(s_1+k)+{s_2\choose {r}}2^{-rt}\right)=
(1-2^{-t})^{s_2}+E, 
\end{multline*}
where
\[
E\ll E_1+{s_2\choose {r}}2^{-rt}+\left|\sum_{k=r}^{s_2}(-1)^k{s_2 \choose k}2^{-kt}\right| \ll E_1+\sum_{k=r}^{s_2}{s_2 \choose k}2^{-kt}=E_1+E_2,
\]
as desired. This concludes the proof.    
\end{proof}

The main results of this section are the following two corollaries.

\begin{cor}\label{cor2.4}
Let $m$ be a square-free positive integer with $p_1=P^-(m)$ large enough, $t=\o(m)$, and let $C_1=\{0,l\}$ and $C_2$ be disjoint subsets of nonnegative integers less than $p_1$. Suppose that $s=|C_2|\leq 0.12\cdot2^t\log p_1$ and $t\leq p_1^{0.01}$. Then
$$ |RN_m(C_1,C_2)|=m2^{-2t}(1-2^{-t})^s+O(m2^{-2t}(1+2^{1-t})^sp_1^{-0.48}).	
$$	
\end{cor}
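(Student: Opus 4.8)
The plan is to apply Lemma \ref{lem2.3} with the specific choice $C_1=\{0,l\}$, so that $s_1=2$ and $s_2=s$, and then show that under the hypotheses $s\le 0.13\cdot 2^t\log p_1$ and $t\le p_1^{0.01}$ the two error terms $E_1$ and $E_2$ are each $O((1+2^{1-t})^s p_1^{-0.48})$ after dividing through by the main-term normalization. The main term $m2^{-s_1t}(1-2^{-t})^{s_2}$ becomes exactly $m2^{-2t}(1-2^{-t})^s$, matching the claim, so the whole content is the estimation of the error. I would choose the free parameter $r\in\N$ in Lemma \ref{lem2.3} appropriately — something like $r=\lceil s_1\rceil$ or a small constant (likely $r$ of size $O(1)$, perhaps $r=1$ or $2$) — to balance the two contributions; the exact choice is dictated by making $E_2$ negligible while keeping $E_1$ controlled.

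First I would bound $E(\nu)=\sum_{d\mid m,\,d>1}\frac{(\nu 2^{\nu})^{\omega(d)}}{d^{1/2}}$. Since $m$ is square-free, this is a multiplicative sum and factors as $E(\nu)=\prod_{p\mid m}\bigl(1+\frac{\nu 2^\nu}{p^{1/2}}\bigr)-1$. Using $P^-(m)>p_1$ (so every prime divisor is at least $p_1$) together with $t=\omega(m)$ and the hypothesis $t\le p_1^{0.01}$, each factor is $1+O(\nu 2^\nu p_1^{-1/2})$, and the product over $t$ primes gives $E(\nu)\ll t\,\nu 2^\nu p_1^{-1/2}\bigl(1+O(\cdots)\bigr)$; with $\nu=s_1+k=2+k$ bounded appropriately and $t\le p_1^{0.01}$, this yields a bound of the shape $E(\nu)\ll p_1^{-0.49}$ (the $2^{-0.5}$ power of $p_1$ loses a little to the $\nu 2^\nu$ and the factor $t$, which is where the exponent $0.48$ rather than $0.5$ comes from). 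I would then insert this into $E_1=\sum_{k=0}^{\min\{r,s\}}\binom{s}{k}2^{-kt}E(s_1+k)$, bound $E(s_1+k)$ uniformly by its value at the largest relevant $k$, and compare the remaining binomial sum $\sum_k\binom{s}{k}2^{-kt}$ with $(1+2^{-t})^s\le(1+2^{1-t})^s$.

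For the term $E_2=\sum_{k=r}^{s}\binom{s}{k}2^{-kt}$ I would use the tail-of-binomial estimate: each tail term is dominated by $\binom{s}{k}(2^{-t})^k$, and the bound $s\le 0.13\cdot 2^t\log p_1$ forces $s2^{-t}\le 0.13\log p_1$, so the ratio of consecutive terms is small and the tail starting at $k=r$ is roughly $\binom{s}{r}2^{-rt}\ll (s2^{-t})^r/r!\ll(0.13\log p_1)^r/r!$; choosing $r$ a suitable constant (or slowly growing) makes this $\ll p_1^{-0.48}(1+2^{1-t})^s$ as well, since $(1+2^{1-t})^s\ge 1$. The main obstacle is the interplay between the three constraints — the condition $s\le 0.13\cdot 2^t\log p_1$ is exactly calibrated so that the logarithmic power arising from $E_2$ can be absorbed into a negative power of $p_1$, and the condition $t\le p_1^{0.01}$ is what lets the product in $E(\nu)$ stay close to $1$. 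Getting both error pieces below $p_1^{-0.48}$ simultaneously, with the normalization factor $(1+2^{1-t})^s$ correctly tracking the main term, is the delicate bookkeeping step; I expect the constant $0.13$ and the exponent $0.48$ to be exactly what make the two competing bounds meet.
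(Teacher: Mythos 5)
Your overall strategy---apply Lemma \ref{lem2.3} with $s_1=2$, $s_2=s$, identify the main term, and then show $E_1,E_2\ll(1+2^{1-t})^sp_1^{-0.48}$---is exactly the paper's, and your multiplicative bound for $E(\nu)$ is essentially the paper's estimate $E(\nu)\leq(1+\nu2^{\nu}p_1^{-1/2})^t-1$. But your choice of the free parameter $r$ is a genuine gap, and it breaks the bound on $E_2$. You propose $r=O(1)$ ``or slowly growing''; with such an $r$ the tail $E_2=\sum_{k\geq r}\binom{s}{k}2^{-kt}$ is \emph{not} small. Your own estimate exhibits the problem: the term at $k=r$ is about $(s2^{-t})^r/r!\leq(0.13\log p_1)^r/r!$, which for bounded $r$ tends to infinity rather than to zero. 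Worse, the terms $\binom{s}{k}2^{-kt}$ \emph{increase} in $k$ until $k\approx s2^{-t}$: the ratio of consecutive terms is $\frac{s-k}{k+1}2^{-t}\approx\frac{s2^{-t}}{k+1}$, which is large (not small, as you claim) for bounded $k$ when $s$ is near its maximal allowed size $0.13\cdot2^t\log p_1$. Hence the tail starting at a constant $r$ is essentially the whole sum, which can be as large as $(1+2^{-t})^s\leq e^{s2^{-t}}\leq p_1^{0.13}$. No constant or slowly growing choice of $r$ can repair this.

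The paper instead takes $r=[\frac{0.49\log p_1}{\log2}]$, growing linearly in $\log p_1$. Then $se/(r2^t)\leq 0.13e\log p_1/r<1/2$ (this inequality is precisely the calibration between the constants $0.13$ and $0.49$), so the tail is genuinely geometric with ratio below $1/2$ and $E_2\ll\sum_{k\geq r}2^{-k}\ll 2^{-r}\ll p_1^{-0.49}$. The constraint in the other direction is that $r$ cannot exceed roughly $\tfrac12\log_2p_1$, because bounding $E(k+2)$ for all $k\leq r$ requires $t(k+2)2^{k+2}<p_1^{1/2}$; the two requirements leave only a narrow window around $r\approx0.7\log p_1$, which is exactly where the paper's constants come from. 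Finally, a secondary point: once $r$ is this large you cannot ``bound $E(s_1+k)$ uniformly by its value at the largest relevant $k$'' in $E_1$, since that would cost a factor $2^r\approx p_1^{0.49}$ and leave you with a bound of size $\gg\log p_1$; you must keep the factor $2^k$ from $E(k+2)\ll tr2^kp_1^{-1/2}$ inside the binomial sum, and it is precisely this factor that produces $(1+2^{1-t})^s$ rather than $(1+2^{-t})^s$ in the stated error term.
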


\begin{proof} We set $r=\left\lfloor\frac{0.48\log p_1}{\log2}\right\rfloor$ in the previous lemma and estimate the quantities $E(k+2)$ for $0\leq k\leq r$.
Since then $t(k+2)2^{k+2}<p_1^{1/2}$, we have
\begin{multline*}
E(k+2)=\sum_{d|m,d>1}\frac{((k+2)2^{k+2})^{\o(d)}}{d^{1/2}} =\prod_{p|m}\left(\frac{(k+2)2^{k+2}}{p^{1/2}}+1\right)-1  \\ \leq \exp\left((k+2)2^{k+2}\sum_{p|m}p^{-1/2}\right)-1 \leq  
\exp(t(k+2)2^{k+2}p_1^{-1/2})-1 \ll tr2^kp_1^{-1/2}.
\end{multline*}
We thus see that
$$
E_1=\sum_{k=0}^{\min\{r,s\}}{s \choose k}2^{-kt}E(k+2) \ll trp_1^{-1/2}\sum_{k=0}^s{s\choose k}2^{(1-t)k} = (1+2^{1-t})^strp_1^{-1/2} \ll (1+2^{1-t})^sp_1^{-0.48}.
$$
Further, 
$$
{s \choose k}2^{-kt}\leq \frac{s^k}{k!2^{kt}} \ll   \left(\frac{se}{k2^t}\right)^k.
$$
Besides, we have $se\leq 0.12e2^t\log p_1 < 0.5r2^t$, and therefore
$$
E_2=\sum_{k=r}^s{s \choose k}2^{-kt}  \ll \sum_{k=r}^{\infty}\left(\frac{se}{r2^t}\right)^k \ll 2^{-r} \ll  p_1^{-0.48}.
$$
The claim now follows from Lemma \ref{lem2.3}.  
\end{proof}

\begin{cor}\label{cor2.5}
Let $m$ be a square-free positive integer with $p_1=P^-(m)$ large enough, $t=\o(m)$, and let $C$ be a subset of nonnegative integers less than $p_1$. Suppose that $|C|\geq \a2^t\log p_1$ for some fixed $\a\in(0,0.12)$ and $t\leq p_1^{0.01}$. Then
$$
|N_m(C)| \ll mp_1^{-\a}. 
$$
\end{cor}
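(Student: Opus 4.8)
The plan is to bound $|N_m(C)|$ from above by discarding almost all of the conditions, keeping only a subset $C'\subseteq C$ just large enough to force the count down to size $mp_1^{-\a}$, and then to invoke the near‑independence of non‑residue conditions from Lemma~\ref{lem2.3}. The first step is the trivial monotonicity observation: if $C'\subseteq C$, then the requirement $x+c\notin R_m$ for all $c\in C$ is more restrictive than the same requirement only for $c\in C'$, so $N_m(C)\subseteq N_m(C')$ and hence $|N_m(C)|\le |N_m(C')|$. I would therefore fix a subset $C'\subseteq C$ of size $s:=[\a 2^t\log p_1]+1$; this is possible since $|C|\ge \a 2^t\log p_1$, and for $p_1$ large one has
$$
\a 2^t\log p_1\le s\le 0.13\cdot 2^t\log p_1,
$$
the upper bound holding because $\a<0.13$. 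In particular $s$ lies in the range where the estimates behind Corollary~\ref{cor2.4} apply.

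Next I would apply Lemma~\ref{lem2.3} with $C_1=\varnothing$ (so $s_1=0$) and $C_2=C'$, running the computation of Corollary~\ref{cor2.4} verbatim but with $s_1=0$ in place of $s_1=2$. This only replaces each factor $E(k+2)$ by the smaller quantity $E(k)$ and removes the prefactor $2^{-2t}$, so the same choice $r=[0.49\log p_1/\log 2]$ yields
$$
|N_m(C')|=m(1-2^{-t})^s+O\!\left(m(1+2^{1-t})^s p_1^{-0.48}\right).
$$
For the main term, $s\ge \a 2^t\log p_1$ gives
$$
(1-2^{-t})^s\le \exp(-s\,2^{-t})\le \exp(-\a\log p_1)=p_1^{-\a},
$$
so $m(1-2^{-t})^s\le m p_1^{-\a}$. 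For the error term, $s\le 0.13\cdot 2^t\log p_1$ gives $(1+2^{1-t})^s\le \exp(2s\,2^{-t})\le \exp(0.26\log p_1)=p_1^{0.26}$, whence the error is $O(m p_1^{0.26-0.48})=O(m p_1^{-0.22})$. Since $\a<0.13<0.22$, this is $O(m p_1^{-\a})$, and combining with the monotonicity bound gives $|N_m(C)|\le|N_m(C')|\ll m p_1^{-\a}$, as claimed.

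The one delicate point is the error analysis. Imposing the conditions from all of $C'$ inflates the error by the factor $(1+2^{1-t})^s\approx p_1^{0.26}$, which must be beaten by the cancellation $p_1^{-0.48}$ propagated from Lemma~\ref{lem2.1} through Lemma~\ref{lem2.3}; this is precisely where the hypotheses enter. The bound $s\le 0.13\cdot 2^t\log p_1$ (forced by $\a<0.13$) keeps the inflation factor below $p_1^{0.26}$, while $t\le p_1^{0.01}$ keeps the multiplicative error $E(k)$ under control for all $k\le r$. The heart of the argument is the balancing act in the choice of $|C'|\asymp \a 2^t\log p_1$: it is simultaneously large enough to push the main term down to $p_1^{-\a}$ and small enough that the error stays strictly below it.
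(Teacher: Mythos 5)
Your proposal is correct and follows essentially the same route as the paper's own proof: reduce by monotonicity to a subset of size $[\a 2^t\log p_1]+1$, apply Lemma \ref{lem2.3} with $C_1=\varnothing$ by rerunning the error analysis of Corollary \ref{cor2.4}, and bound the main term by $(1-2^{-t})^s\le e^{-s2^{-t}}\le p_1^{-\a}$ and the error by $(1+2^{1-t})^sp_1^{-0.48}\ll p_1^{-0.22}$. The paper states this more tersely (``arguing as in the proof of the previous lemma''), whereas you spell out the details, including the correct observation that setting $s_1=0$ only shrinks the error terms.
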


\begin{proof} Let $s=|C|$; since $N_m(C')\leq N_m(C'')$ whenever $C'\supseteq C''$, we may assume that $s=\lfloor\a2^t\log p_1\rfloor$. Arguing as in the proof of the previous lemma and using the inequality $(1+x)\leq e^x$, $x\in\R$, we get from Lemma \ref{lem2.3}
$$
m^{-1}|N_m(C)| \ll (1-2^{-t})^s+(1+2^{1-t})^sp_1^{-0.48}\ll p_1^{-\a}+p_1^{2\a-0.48} \ll p_1^{-\a},
$$
as desired.   
\end{proof}

\section{Contribution of small gaps}\label{Sec3}  

To work with $S(R_M)$, we define $K_l(M)$ to be the number of $x\in\Z/M\Z$ with $x,x+l\in R_M$ and $x+j\notin R_M$ for all $1\leq j\leq l-1$, and write
\begin{equation}\label{3.0}
S(R_M)=\sum_{l\leq D}K_l(M)l^2 + \sum_{l>D}K_l(M)l^2
\end{equation}
for an appropriate $D=D(M)$. The aim of this section is to find the asymptotic formula for $\sum_{l\leq D}K_l(M)l^2$ with $M$ from Theorems \ref{th1.1} and \ref{th1.4}.

\smallskip 

For $2\leq j\leq A$ and $i\in \Z/A\Z$, we set $I(i,j)=\{i+1,...,i+j-1\}\pmod{A}$ and
\begin{equation}\label{3.1} 
r(i,j)=\#\big(I(i,j)\cap R_A\big);
\end{equation}
we also adopt the convention that $r(i,1)=0$. Obviously, $0\leq r(i,j)\leq |R_A|$.

The next lemma is a general statement which allows us to convert estimates for additive configurations in quadratic residues to an expression for $\sum_{l\leq D}K_l(M)l^2$.

\begin{lem}\label{lem3.1}
Let $1\leq D<m$, $(A,m)=1$, $M=Am$, $t=\o(m)$, and $D_0=D|R_A|A^{-1}+|R_A|$. Suppose that for any $s\leq D_0$ we have an estimate of the type
\begin{equation}\label{3.2} 
\sum_{u\in\Z/m\Z} R_m(u)R_m(u+l)\prod_{j=1}^{s}(1-R_m(u+\nu_j))=m2^{-2t}(1-2^{-t})^s+O(L(m,s))
\end{equation}
with some nonnegative real numbers $L(m,s)$, uniformly in distinct positive integers $\nu_1,\ldots,\nu_s,l$ not exceeding $D$. Then 
\begin{equation}\label{3.3}
\sum_{l\leq D} K_l(M)l^2 =
m2^{-2t}\sum_{j=1}^{A}\sum_{i\in R_A\cap(R_A-j)}\sum_{Ak+j\leq D}y_t^{|R_A|k+r(i,j)}(Ak+j)^2+O(E(M,D_0)),
\end{equation}
where $y_t=1-2^{-t}$ and
\begin{equation}\label{3.4}
E(M,D_0)=A^3|R_A|\sum_{s\leq D_0}L(m,s)(s+1)^2.	
\end{equation}
\end{lem}

\begin{proof} Since $(A,m)=1$, we have $R_M(x)=R_A(x)R_m(x)$, and hence
\begin{equation}\label{3.5}
R_M(x)=\begin{cases}
R_m(x),  & x\in R_A,\\
0,  & \mbox{otherwise.}
\end{cases}
\end{equation}
Fix $l\leq D$. We can write 
\begin{equation*}
K_l(M)=\sum_{x\in\Z/M\Z}R_M(x)R_M(x+l)\prod_{\nu=1}^{l-1}(1-R_M(x+\nu))=\sum_{i=0}^{A-1}K_{l,i}(M),
\end{equation*}
where
\begin{equation*}
K_{l,i}(M)=\sum_{x\in\Z/M\Z: x\equiv i\pmod{A}}R_M(x)R_M(x+l)\prod_{\nu=1}^{l-1}(1-R_M(x+\nu)).
\end{equation*}
Further, using (\ref{3.5}) we get
\begin{equation}\label{3.6}
K_l(M)=\sum_{i\in R_A\cap(R_A-l)}K_{l,i}(M).
\end{equation}
Now fix  $i\in R_A\cap(R_A-l)$. Let $\nu_1,\ldots,\nu_{s(l,i,A)}$ be the numbers $\nu\in\{1,\ldots,l-1\}$ such that $(\nu+i)\pmod{A}\in R_A$. We write $l=Ak+j$ for some $k\geq0$ and $1\leq j\leq A$; then $l/A-1\leq k\leq l/A$ and
\begin{equation}\label{3.7}
s(l,i,A)=|R_A|k+r(i,j) = l|R_A|A^{-1}+\theta'|R_A|
\end{equation}
for some $\theta'\in[-1,1]$. In particular, $0\leq s(l,i,A)\leq D|R_A|A^{-1}+|R_A|=D_0$ for any $l\leq D$. Taking into account (\ref{3.5}) and using the fact that the map $u\mapsto Au+i$ is a bijection from $\Z/m\Z$ to $\Z/m\Z$, we obtain from (\ref{3.2}) 
\begin{multline*}
K_{l,i}(M)=\sum_{x\in\Z/M\Z: \, x\equiv i\pmod{A}}R_m(x)R_m(x+l)\prod_{j=1}^{s(l,i,A)}(1-R_m(x+\nu_j))\\
=\sum_{u\in\Z/m\Z} R_m(Au+i)R_m(Au+i+l)\prod_{j=1}^{s(l,i,A)}(1-R_m(Au+i+\nu_j))\\
=\sum_{u\in\Z/m\Z} R_m(u)R_m(u+l)\prod_{j=1}^{s(l,i,A)}(1-R_m(u+\nu_j))=m2^{-2t}y_t^{s(l,i,A)}+O(L(m,s(l,i,A))).
\end{multline*}
Since $l\equiv j\pmod{A}$, from here, (\ref{3.6}), and (\ref{3.7}) we have
\begin{equation*}\label{N_l}
K_l(M)=m2^{-2t}\sum_{i\in R_A\cap(R_A-j)}y_t^{|R_A|k+r(i,j)}+O\left(\sum_{i\in R_A\cap (R_A-l)}L(m,s(l,i,A))\right).
\end{equation*}
Summing this estimate over $l\leq D$, we get 
\begin{equation*}
\sum_{l\leq D} K_l(M)l^2 =
m2^{-2t}\sum_{j=1}^{A}\sum_{i\in R_A\cap(R_A-j)}\sum_{Ak+j\leq D}y_t^{|R_A|k+r(i,j)}(Ak+j)^2+O(\widetilde{E}(M,D_0)),
\end{equation*}
where
$$
\widetilde{E}(M,D_0)=\sum_{l\leq D}\sum_{i\in R_A\cap (R_A-l)}L(m,s(l,i,A))l^2.
$$
Note that (\ref{3.7}) implies $l\leq s(l,i,A)A|R_A|^{-1}+A$ for any $i$. Further, if $l$ runs from $Ak+1$ to $Ak+A$ and $i$ runs over $R_A\cap (R_A-l)$, then $s(l,i,A)$ varies between $|R_A|k$ and $|R_A|k+|R_A|$. Hence, for any $s\geq0$ there are at most $O(A|R_A|)$ pairs $(l,i)$ with $s(A,l,i)=s$. Thus,
$$
\widetilde{E}(M,D_0)\ll \sum_{s=0}^{D_0}L(m,s)A|R_A|\left(sA|R_A|^{-1}+A\right)^2 \ll A^3|R_A|\sum_{s\leq D_0}L(m,s)(s+1)^2.
$$
This concludes the proof. 
\end{proof}

Our goal now is to apply Lemma \ref{lem3.1} together with the results from Section \ref{Sec2} to rewrite $\sum_{l\leq D}K_l(M)l^2$ in terms of the function 
\begin{equation}\label{3.8}
g_A(y)=\sum_{j=1}^{A}\sum_{i\in R_A\cap(R_A-j)}y^{r(i,j)}\sum_{k\geq 0}y^{|R_A|k}(Ak+j)^2, \quad y\in(0,1)
\end{equation}
(this expression will be simplified in Section \ref{Sec5}). Note that $g_A$ does not depend on $m$. 

We also introduce the function
\begin{equation}\label{3.9}
G_A(y,D)=\sum_{j=1}^{A}\sum_{i\in R_A\cap(R_A-j)}y^{r(i,j)}\sum_{Ak+j>D}y^{|R_A|k}(Ak+j)^2,
\end{equation}	
which is the ``tail'' of $g_A(y)$. We will need the following bound.

\begin{lem}\label{lem3.2}
Let $D\geq 2A$. Then
$$
G_A(y,D)\ll  A^4y^{|R_A|(\lfloor D/A\rfloor -2)}\left(1-y^{|R_A|}\right)^{-1}\left(D^2A^{-2}+\left(1-y^{|R_A|}\right)^{-2}\right).
$$
\end{lem}
 
\begin{proof}
Firstly, we have
\begin{equation*}
\sum_{j=1}^A\sum_{i\in R_A\cap(R_A-j)}1\leq \sum_{i=1}^AR_A(i)\sum_{j=1}^AR_A(i+j)=|R_A|^2.
\end{equation*}
Hence,
\begin{multline*} 
G_A(y,D)\leq \sum_{j=1}^A\sum_{i\in R_A\cap(R_A-j)}\sum_{Ak+A>D}y^{|R_A|k}(Ak+A)^2 \\ 
\leq A^2|R_A|^2\sum_{k+1>D/A}y^{|R_A|k}(k+1)^2 \ll A^2|R_A|^2y^{-|R_A|}\sum_{k>D/A-1}k^2y^{|R_A|k}.
\end{multline*}
Now we note that for all positive integer $N$ and $x\in (0,1)$, 
\begin{equation*}
\sum_{n\geq N}n^2x^n \ll x^N(1-x)^{-1}(N^2+(1-x)^{-2})
\end{equation*} 
(this can be obtained by differentiating twice the equality $\sum_{n\geq N}x^n=x^N(1-x)^{-1}$), and therefore
$$
G_A(y,D)\ll A^4y^{|R_A|(\lfloor D/A\rfloor -2)}\left(1-y^{|R_A|}\right)^{-1}\left(D^2A^{-2}+\left(1-y^{|R_A|}\right)^{-2}\right),
$$
as desired.	
\end{proof}	 

Now we are ready to present the main results of this section. Let $\e_0\in(0,1/2)$ be a fixed number to be chosen later.

\begin{cor}\label{cor3.3}
Let $M=Ap$ with prime $p$ and $A\leq 0.1p^{\e_0}$. Then	for $D=A|R_A|^{-1}p^{\e_0}$ we have
$$
\sum_{l\leq D}K_l(M)l^2 = 0.25g_A(0.5)p+O(A^4p^{1/2+4\e_0}).
$$
\end{cor}

\begin{proof} We have $10A\leq D<p$ and $D_0=D|R_A|A^{-1}+|R_A|=p^{\e_0}+|R_A|\leq 2p^{\e_0}$. Note also that $A\leq 0.1p^{\e_0}$ yields $(A,p)=1$. Lemma \ref{lem2.2} then gives us (\ref{3.2}) with $t=1$ and $L(p,s)=(s+2)p^{1/2}$. 
By Lemma \ref{lem3.1} and the definition (\ref{3.9}) of $G_A(y,D)$,
$$
\sum_{l\leq D}K_l(M)l^2 = 0.25g_A(0.5)p+O\Big(E(M,D_0)+G_A(0.5,D)p\Big),
$$
and it suffices to bound the error terms. By the definition (\ref{3.3}) of $E(M_0,D)$, we see that
$$
E(M,D_0)\ll A^3|R_A|p^{1/2}\sum_{s\leq D_0}s^3 \ll A^4p^{1/2+4\e_0}.
$$	
Further, since $(1-2^{-|R_A|})^{-1}\leq 2$ and $\lfloor D/A\rfloor-2\geq 0.5|R_A|^{-1}p^{\e_0}$, Lemma \ref{lem3.2} implies 
$$
G_A(0.5,D)p \ll A^4p^{1+2\e_0}2^{-0.5p^{\e_0}} \ll 1.
$$
The claim follows. 
\end{proof}

\begin{cor}\label{cor3.4}
Let $M=Am\in \O$ and $D=0.11A|R_A|^{-1}2^t\log p_1$, where $t=\o(m)$ and $p_1=P^-(m)$. Then
$$
\sum_{l\leq D}K_l(M)l^2=m2^{-2t}g_A(y_t)+O(MA^32^tp_1^{-0.1}).
$$ 
\end{cor}

\begin{proof} Note that $M\in\O$ implies $A\leq 2^t$, $2A\leq D<p_1<m$, and $t\leq p_1^{0.01}$. We also have $D_0=0.11\cdot2^t\log p_1+|R_A|\leq 0.12\cdot2^t\log p_1$. Now Corollary \ref{cor2.4} gives us (\ref{3.2}) with
$$
L(m,s) = m2^{-2t}(1+2^{1-t})^sp_1^{-0.48};
$$
Hence Lemma \ref{lem3.1} yields
$$
\sum_{l\leq D}K_l(M)l^2=m2^{-2t}g_A(y_t)+O\Big(E(M,D_0)+m2^{-2t}G_A(y_t,D)\Big),
$$
where
\begin{multline*}
E(M,D_0) \ll A^3|R_A|m2^{-2t}p_1^{-0.48}\sum_{s\leq D_0}(1+2^{1-t})^s(s+1)^2 \\
\ll MA^32^{-t}p_1^{-0.48}D_0^2(1+2^{1-t})^{D_0} \ll MA^32^tp_1^{-0.47}(1+2^{1-t})^{D_0} \leq MA^32^tp_1^{-0.23}
\end{multline*}
(here and in what follows we use the inequality $1+x\leq e^x$, $x\in\R$). Further, $y_t^{|R_A|}\leq y_t = 1-2^{-t}$, and hence $\big(1-y_t^{|R_A|}\big)^{-1}\leq 2^t$; also $1/2\leq y_t<1$ and $y_t^{-|R_A|}\ll 1$. Using Lemma \ref{lem3.2}, we see that
$$
G_A(y_t,D)\ll A^4y_t^{|R_A|D/A}2^t\left( D^2A^{-2}+2^{2t}\right)
\ll A^42^{3t}((\log p_1)^2+1)p_1^{-0.11} \ll A^42^{3t}p_1^{-0.1},
$$ 
and so $$
m2^{-2t}G_A(y_t,D)\ll MA^32^tp_1^{-0.1}.
$$
This concludes the proof. 
\end{proof}

\section{Contribution of large gaps}\label{Sec4}
In view of the results of the previous section, to rewrite $S(R_M)$ in terms of the function $g_A$, it remains to bound the sums $\sum_{l>D}K_l(M)l^2$. To work with large gaps, we will need famous Burgess' estimates for character sums.

\begin{lem}\label{lem4.1} 
Let $k$ be square-free, $\chi$ be a nonprincipal character modulo $k$, and $\e>0$, $r\in\N$. Then for any $x\in\Z/k\Z$ and $h\in\N$ 
$$
\left|\sum_{i=0}^{h-1}\chi(x+i)\right|\ll_{r,\e}h^{1-1/r}k^{(r+1)/(4r^2)+\e}. 
$$	
\end{lem}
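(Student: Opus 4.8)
The plan is to prove this as a special case of the Burgess bound, passing from prime to square-free modulus and using the complete character sum estimate of Lemma \ref{lem2.1} (applied prime-by-prime through the Chinese Remainder Theorem) as the sole arithmetic input. Write $S=\sum_{i=0}^{h-1}\chi(x+i)$; since $\chi$ is nonprincipal, a full period contributes $0$, so we may assume $h<k$. Introduce parameters $A,B\ge1$ to be optimised at the end, with $AB$ small compared with $h$. The heart of the method is the Burgess amplification: for $(a,k)=1$ multiplicativity gives $\chi(x+i+ab)=\chi(a)\,\chi\!\big(\bar a(x+i)+b\big)$, where $\bar a$ is the inverse of $a$ modulo $k$. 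Translating the interval of summation in $S$ by $ab$ alters at most $O(ab)$ terms, so averaging over $1\le a\le A$ with $(a,k)=1$ and $0\le b<B$ yields
$$AB\,|S|\le \sum_{t\bmod k}\nu(t)\,|V(t)|+O(A^2B^2),$$
where $V(t)=\sum_{0\le b<B}\chi(t+b)$ and $\nu(t)=\#\{(a,i):1\le a\le A,\ (a,k)=1,\ 0\le i<h,\ \bar a(x+i)\equiv t\}$. (For square-free $k$ with large least prime factor almost all $a\le A$ are coprime to $k$, so discarding the rest is harmless.)

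The second step is Hölder's inequality, used to isolate a high even moment of $V$: applying Hölder with exponent $2r$ separates $\big(\sum_t|V(t)|^{2r}\big)^{1/(2r)}$ from a weighted sum of $\nu$, which is then controlled by interpolating between the first and second moments of $\nu$. These moments are elementary: $\sum_t\nu(t)\ll Ah$, while, because $(a_1,k)=1$ forces the congruence $a_2(x+i_1)\equiv a_1(x+i_2)\pmod k$ to pin $i_2$ into a fixed residue class modulo $k$, one gets $\sum_t\nu(t)^2\ll A^2h\,(1+hk^{-1})$.

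The arithmetic core is the even moment bound. Expanding,
$$\sum_{t\bmod k}|V(t)|^{2r}=\sum_{b_1,\dots,b_{2r}}\ \sum_{t\bmod k}\chi\big((t+b_1)\cdots(t+b_r)\big)\,\overline{\chi}\big((t+b_{r+1})\cdots(t+b_{2r})\big),$$
and for each tuple the inner complete sum factors over the primes $p\mid k$ by the Chinese Remainder Theorem. On each factor Lemma \ref{lem2.1} gives a bound $O(r\sqrt p)$ unless the associated rational function is a $\chi$-power modulo $p$; collecting the prime factors produces the square-free Weil bound $\ll_{r,\e}k^{1/2+\e}$ for the genuinely off-diagonal tuples, whereas the diagonal tuples (those in which the $b_j$ pair up) number $O_r(B^r)$ and each contributes at most $k$. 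This gives $\sum_t|V(t)|^{2r}\ll_{r,\e}k^{\e}\big(B^{r}k+B^{2r}k^{1/2}\big)$.

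Substituting this back through Hölder, dividing by $AB$, and choosing $B$ of size about $k^{1/(2r)}$ together with a matching small power $A$ of $k$ (keeping $r$ fixed) balances the terms and produces the exponent $(r+1)/(4r^2)$; the shift error $O(A^2B^2/(AB))=O(AB)$ is dominated for this choice. I expect the main obstacle to be exactly the passage from prime to square-free modulus in the moment bound: one must control the complete sums modulo $k$ where the rational function may be a $\chi$-power modulo some prime divisors of $k$ but not others, and organise the resulting CRT factorisation of the diagonal contribution cleanly. This effect only costs a factor $k^{o(1)}$, absorbed into $k^{\e}$, but it is the step that requires the most care.
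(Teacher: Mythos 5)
Your proposal attempts something far more ambitious than the paper, whose entire proof of this lemma is a citation of Burgess's Theorem~2 in [Bur63]; so the honest comparison is with Burgess's own argument, whose skeleton (shift by $ab$, amplify over $a$, H\"older, moments of $\nu$ and of $V$, Weil input) you reproduce correctly. The fatal gap is your second-moment bound for $\nu$. The estimate $\sum_t\nu(t)^2\ll A^2h(1+hk^{-1})$ is the trivial bound (one choice of $i_2$ per triple $(a_1,a_2,i_1)$), and with it the amplifier cancels out of the method identically: by your H\"older step the main term is
$$\frac{1}{AB}\,(Ah)^{1-1/r}\bigl(A^2h\bigr)^{1/(2r)}\Bigl(\sum_t|V(t)|^{2r}\Bigr)^{1/(2r)}=\frac{1}{B}\,h^{1-1/(2r)}\Bigl(\sum_t|V(t)|^{2r}\Bigr)^{1/(2r)},$$
in which $A$ has disappeared. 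The diagonal part $B^{2r}k^{1/2+\e}$ of the $V$-moment then forces this main term to be at least $h^{1-1/(2r)}k^{1/(4r)}$ no matter how $B$ is chosen, and $h^{1-1/(2r)}k^{1/(4r)}\leq h^{1-1/r}k^{(r+1)/(4r^2)}$ holds only when $h\leq k^{1/(2r)}$ --- exactly the range where the lemma is trivial (there $h^{1-1/r}k^{(r+1)/(4r^2)}\geq h$). Worse, your floor exceeds the trivial bound $h$ whenever $h\leq k^{1/2}$, so your argument gives nothing in the range relevant to this paper (Lemma~\ref{lem4.3} applies Lemma~\ref{lem4.1} with $k\leq h^3$, i.e.\ $h$ as small as $k^{1/3}$). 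The actual content of Burgess's method is the nontrivial bound $\sum_t\nu(t)^2\ll (Ah)^{1+o(1)}$ (valid when, say, $A\leq h$ and $A^2h<k/2$): the congruence $a_2(x+i_1)\equiv a_1(x+i_2)\pmod k$, rewritten as $(a_2-a_1)x\equiv a_1i_2-a_2i_1\pmod k$, has essentially only diagonal solutions, because a residue $x$ has at most one primitive low-height representation $u/v\pmod{k}$, so all solutions lie on a single rational line $(a_2-a_1,\,a_1i_2-a_2i_1)=\lambda(v_0,u_0)$ and a divisor count finishes. ``Pinning $i_2$ modulo $k$'' cannot substitute for this; it is the step your sketch is missing.

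There is a second, independent gap: the shift error. Even granting the correct second moment $\ll Ahk^{o(1)}$, matching the exponent $(r+1)/(4r^2)$ forces $A\asymp hk^{-1/(2r)}$ and $B\asymp k^{1/(2r)}$, hence $AB\asymp h$, so the error $O(AB)$ you carry is of the same size as the trivial bound for $S$ and is not ``dominated''; while keeping $A$ a small fixed power of $k$, as you propose, proves the bound only for $h\ll Ak^{1/(2r)}$ and again misses the critical range $h\asymp k^{1/4+1/(4r)}$. Burgess, and the textbook treatments, resolve this by induction on $h$: the head and tail segments lost in the shift are themselves character sums of length $ab<h$ and are estimated by the inductive hypothesis, not by $O(ab)$. (Your CRT plan for the $2r$-th moment of $V$ is sound in outline --- it is essentially the paper's Lemma~\ref{lem4.2}, Burgess's lemma for primitive characters, and as you note the imprimitive/small-prime bookkeeping costs only $k^{o(1)}$; note the paper reduces to primitive characters via conductors inside Lemma~\ref{lem4.3}.) As written, then, your proposal does not prove the lemma: the two steps it treats as routine are precisely the two hard steps of Burgess's proof, and the paper itself simply cites Burgess rather than reproving them.
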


\begin{proof} See \cite[Theorem 2]{burg1}.    
\end{proof}

\begin{lem}\label{lem4.2} 
Let $k$ be square-free, $\chi$ be a primitive character modulo $k$, and $\e>0$, $r\in\N$. Then 
$$
\sum_{x\in\Z/k\Z}\left|\sum_{i=0}^{h-1}\chi(x+i)\right|^{2r} \ll_{r,\e} kh^r+k^{1/2+\e}h^{2r}.
$$	
\end{lem}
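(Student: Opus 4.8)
The plan is to expand the $2r$-th power into a sum of complete character sums modulo $k$, factor these over the primes dividing $k$ by the Chinese Remainder Theorem and the multiplicativity of $\chi$, and estimate each local factor by the Weil-type bound of Lemma~\ref{lem2.1}. Writing $S(x)=\sum_{0\le i<h}\chi(x+i)$, I would first record the identity
\[
\sum_{x\in\Z_k}|S(x)|^{2r}=\sum_{\mathbf a,\mathbf b\in\{0,\ldots,h-1\}^r}W(\mathbf a,\mathbf b),
\qquad
W(\mathbf a,\mathbf b)=\sum_{x\in\Z_k}\prod_{\mu=1}^{r}\chi(x+a_\mu)\prod_{\nu=1}^{r}\overline{\chi}(x+b_\nu).
\]
Since $k$ is square-free and $\chi=\prod_{p\mid k}\chi_p$ with every $\chi_p$ nonprincipal (this is where primitivity enters), CRT and multiplicativity give $W(\mathbf a,\mathbf b)=\prod_{p\mid k}W_p$, where $W_p$ is the analogous complete sum over $\Z_p$.

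The local step is to estimate $W_p$ via Lemma~\ref{lem2.1}. Fixing $p$, I would collect the equal residues among the $a_\mu,b_\nu\pmod p$, so that each occurring residue $c$ contributes an effective character $\chi_p^{e_c}$ with signed exponent $e_c$ (the convention $\chi_p^{0}=\chi_0$ causing the terms at the excluded points $x\equiv-c$ to vanish automatically). If some $e_c\not\equiv0\pmod{\ord\chi_p}$ — call $p$ \emph{generic} for the pair — then Lemma~\ref{lem2.1} applies to at most $2r$ distinct shifts and yields $|W_p|\le 2r\sqrt p$; if every $e_c\equiv0\pmod{\ord\chi_p}$ — $p$ \emph{degenerate} — the integrand is principal and $|W_p|\le p$. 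Consequently
\[
|W(\mathbf a,\mathbf b)|\le (2r)^{\o(k)}\sqrt k\prod_{p\ \mathrm{degenerate}}\sqrt p\le k .
\]

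I would then split the outer sum. Call $(\mathbf a,\mathbf b)$ \emph{principal} if $p$ is degenerate for every $p\mid k$; for such pairs $|W|\le k$, and the simultaneous vanishing conditions force the combined multiset $\{a_\mu\}\sqcup\{b_\nu\}$ to have a rigid (power-divisible) structure, which makes their number $\ll_r h^r$, contributing $\ll_r kh^r$ — the first term. For the remaining pairs I would absorb $(2r)^{\o(k)}\ll_{r,\e}k^{\e}$ and control $\prod_{p\ \mathrm{degenerate}}\sqrt p$ by observing that every degenerate prime divides a single fixed nonzero integer formed from the differences of the elementary symmetric functions of $\mathbf a$ and $\mathbf b$; this bounds the degenerate product and gives $|W|\ll_{r,\e}k^{1/2+\e}$, so that the at most $h^{2r}$ such pairs contribute $\ll_{r,\e}k^{1/2+\e}h^{2r}$. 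Adding the two contributions yields the claim. The main obstacle is exactly this last estimate: bounding the contribution of the degenerate primes, in particular handling the merging of residues at primes below $h$ (where $|W_p|\le p$ is wasteful) so that no spurious powers of $h$ survive; I expect to resolve it by the divisibility argument above together with a careful count of how many pairs can be degenerate at a given prime, rather than a pair-by-pair trivial bound.
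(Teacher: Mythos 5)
The paper does not actually prove this lemma: it is Burgess's mean-value estimate, quoted directly from \cite{burg2} (Lemma 8 there). Your skeleton --- expanding the $2r$-th power into complete sums $W(\mathbf a,\mathbf b)$, factoring $W=\prod_{p\mid k}W_p$ by CRT and primitivity, and bounding the generic local factors by $2r\sqrt p$ via Lemma \ref{lem2.1} --- is indeed the skeleton of Burgess's argument, so the approach is the right one. The problem is the splitting step, where your proposal contains a genuine error, not just a deferred technicality.

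Your dichotomy asserts that any pair which is generic at even one prime satisfies $|W(\mathbf a,\mathbf b)|\ll_{r,\e}k^{1/2+\e}$, justified by the claim that every degenerate prime divides a fixed nonzero integer built from $\mathbf a,\mathbf b$. Both claims are false. Take $r=2$, $\mathbf a=(0,0)$, $\mathbf b=(1,1)$ (so even $h=2$ works), and $k=7m$ with $m$ a large prime, $\chi_7$ a character of order $3$ modulo $7$, and $\chi_m$ the Legendre symbol modulo $m$; then $\chi=\chi_7\chi_m$ is primitive modulo the square-free number $k$. The prime $m$ is degenerate, since the signed exponents $\pm2$ vanish modulo $\ord\chi_m=2$, and there $W_m=\sum_{x}\chi_m^2(x)\overline{\chi_m^2(x+1)}=m-2$. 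The prime $7$ is generic (the exponents $\pm 2$ are not divisible by $3$), so the pair is non-principal in your terminology; a direct computation gives $W_7=\sum_x\overline{\chi_7}(x)\chi_7(x+1)=-1$. Hence
\begin{equation*}
|W(\mathbf a,\mathbf b)|=|W_7|\cdot|W_m|=m-2\gg k,
\end{equation*}
which is nowhere near $k^{1/2+\e}$. Moreover, the degenerate prime here is $m$, an arbitrary large prime, and it divides no fixed nonzero integer formed from $\{0,0,1,1\}$ (for instance the resultant $\prod_{\mu,\nu}(b_\nu-a_\mu)$ equals $1$): degeneracy at $p$ can arise from internal multiplicities interacting with $\ord\chi_p$, not only from collisions between the $a$'s and the $b$'s, which is what your divisibility argument implicitly presupposes. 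The correct repair is to classify pairs by the integer-valued signed multiplicity function $m(\nu)=\#\{\mu: a_\mu=\nu\}-\#\{\nu': b_{\nu'}=\nu\}$: if the $m(\nu)$ all vanish, or share a common divisor $g\ge2$, then at most $r$ distinct integers occur in $(\mathbf a,\mathbf b)$, so such pairs number $\ll_r h^r$ and must be placed in the $kh^r$ term with the trivial bound $|W|\le k$ (this is where the counterexample lives); only when $\gcd_\nu m(\nu)=1$ are all degenerate primes forced to be smaller than $h$ (a degenerate $p\ge h$ would give $\ord\chi_p\mid\gcd_\nu m(\nu)$), and one must then show that degeneracy at many primes below $h$ is offset by the rarity of such pairs --- exactly the small-prime counting you acknowledge as the main obstacle but do not carry out. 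As written, the proposal therefore establishes neither term of the estimate, and both the dichotomy and its supporting divisibility claim need to be replaced, not merely completed.
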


\begin{proof} See \cite[Lemma 8]{burg2}.  
\end{proof}

Let $h\in\N$. For brevity, we denote by $N_M(h)=N_M(\{0,1,...,h-1\})$ the set of $x\in\Z/M\Z$ such that all elements $x,x+1,\ldots,x+h-1$ are quadratic nonresidues modulo $M$. The heart of this section is the following lemma.

\begin{lem} \label{lem4.3}
Let $M$ be a square-free positive integer. Suppose that $h\geq (2C\tau(M))^{50}$,
where $C$ is a large enough absolute constant. Then 
$$
|N_M(h)| \ll M\tau(M)^4h^{-1.47}. 
$$
\end{lem}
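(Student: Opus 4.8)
The plan is to bound $|N_M(h)|$ by detecting, for each $x \in N_M(h)$, many quadratic non-residues among $x, x+1, \dots, x+H-1$ and then exploiting character-sum cancellation. Since $M$ is square-free, write $M = p_1 \cdots p_t$ and recall that $x$ is a quadratic residue modulo $M$ if and only if it is a residue modulo each $p_i$. The key observation is that if $x \in N_M(h)$ then for \emph{every} $i$ in $\{0,\ldots,h-1\}$ the element $x+i$ fails to be a residue modulo $M$, which means $x+i$ is a non-residue modulo at least one prime divisor of $M$. I would first reduce to a single primitive character by pigeonholing: cover the interval by the local non-residue conditions and extract, on a positive proportion of the interval, a condition tied to a fixed modulus $d \mid M$, $d > 1$, with the Legendre symbol $\chi_d$ (or rather the product character modulo $d$) taking a prescribed value. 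The factor $\tau(M)$ in the conclusion should arise precisely from this choice of divisor $d$.

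Next I would set up the moment estimate. On the sub-interval where $x+i$ is a non-residue modulo the selected $d$, the character sum $\sum_{i} \chi_d(x+i)$ is biased away from zero, so $x \in N_M(h)$ forces $|\sum_{i=0}^{h'-1}\chi_d(x+i)|$ to be comparable to $h'$ for a length $h' \gg h$. Then the count of such $x$ can be controlled by a high moment: $|N_M(h)| \ll h^{-2r}\sum_{x \in \Z_d}|\sum_{i=0}^{h'-1}\chi_d(x+i)|^{2r}$, at which point Lemma \ref{lem4.2} (applied with $k=d$) gives $\sum_x |\cdots|^{2r} \ll d h'^{\,r} + d^{1/2+\e}h'^{\,2r}$. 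Balancing the two terms against $h^{2r}$ and optimizing $r$ is what produces the exponent $-1.47$; the hypothesis $h \geq (2C\tau(M))^{50}$ is exactly what is needed so that the main term $d h'^{\,r}/h^{2r}$ dominates and the resulting saving $h^{-1+1/r}$-type factor can be pushed past $h^{-1.47}$ for a suitable fixed $r$ (e.g.\ $r$ around $5$ or $6$). One must be careful that the characters $\chi_d$ appearing are genuinely primitive modulo $d$ so that Lemma \ref{lem4.2} applies; since $M$ is square-free and $d \mid M$, the relevant product of Legendre symbols is primitive modulo $d$, so this causes no difficulty.

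The main obstacle, and the step I expect to require the most care, is the reduction from the multiplicative non-residue condition modulo $M$ to a usable single-modulus character inequality while keeping track of the $\tau(M)$ factors. The condition "$x+i \notin R_M$" only guarantees non-residuosity modulo \emph{some} prime factor, and that factor may vary with $i$; the challenge is to show that a single divisor $d$ works for a positive density of indices $i$ in the interval, so that the character sum over that fixed $d$ is large. This is a covering/pigeonhole argument over the $\tau(M)$ possible divisors, and getting the bookkeeping right — both the density of indices captured and the resulting power of $\tau(M)$ — is where the real work lies. I would carry out this reduction first, obtaining a clean statement of the form "there exist $d \mid M$, $d>1$, a primitive character $\chi_d$, and a length $h' \gg h/\tau(M)^{O(1)}$ with $|\sum_{i=0}^{h'-1}\chi_d(x+i)| \gg h'$ for each $x \in N_M(h)$," and only then invoke Lemma \ref{lem4.2} and optimize.
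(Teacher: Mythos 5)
Your overall strategy (detect non-residuosity by real characters, finish with Burgess) is the same as the paper's, but your central reduction step has a genuine gap, in fact two. First, the pigeonhole cannot deliver what you ask of it. If $x+i\notin R_M$, all you know is that $\chi_p(x+i)=-1$ for \emph{some} prime $p\mid M$ depending on $i$; pigeonholing over divisors gives a $d\mid M$ and a set $S$ of indices of density $\geq 1/\omega(M)$ with $\chi_d(x+i)=-1$ on $S$, but $S$ is not an interval, and on its complement the values $\chi_d(x+i)$ are completely uncontrolled, so the interval sum $\sum_{i=0}^{h-1}\chi_d(x+i)$ can be near zero or even positive. Concretely, take $M=pq$ and a configuration in which $\chi_p(x+i)=-1$ exactly for even $i$ and $\chi_q(x+i)=-1$ exactly for odd $i$: then both Legendre-symbol sums vanish and only the character $\chi_p\chi_q$ of conductor $pq$ carries any bias. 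In general the ``one $-1$ per index'' is spread over all $2^{\omega(M)}-1$ nontrivial real characters, and no single character is guaranteed a bias better than roughly $h/\tau(M)$ --- never $\gg h'$ with $h'\gg h$ as your clean statement demands. The paper sidesteps this entirely with the exact identity
$$\sum_{\chi\in\L}\chi(y)=\prod_{p\mid M}\bigl(1+\chi_p(y)\bigr)-1=-1$$
valid for \emph{every} non-residue $y$, where $\L$ is the set of all nontrivial real characters mod $M$; summing over $i$ gives $\bigl|\sum_{\chi\in\L}\sum_{i=0}^{h-1}\chi(x+i)\bigr|=h$, an identity in which the bias is shared jointly by the whole family, and the count is then extracted by H\"older's inequality over all of $\L$ rather than by isolating one character.

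Second, you never treat small conductors, and your plan breaks there. You apply Lemma \ref{lem4.2} with $k=d$, but if the pigeonholed $d$ is small (say $d=3$) the bound $dh'^{\,r}+d^{1/2+\e}h'^{\,2r}$ is $\gg h'^{\,2r}$, i.e.\ trivial, for every choice of $r$: the moment estimate only saves anything when the conductor is large compared to a power of $h$. This is exactly why the paper splits the characters into $\L_1$ (conductor $\leq h^3$) and $\L_2$ (conductor $>h^3$): the $\L_1$ characters are bounded \emph{pointwise} by Burgess (Lemma \ref{lem4.1} with $r=6$), and the hypothesis $h\geq(2C\tau(M))^{50}$ is used precisely there --- to make the total $\L_1$ contribution at most $h/2$ --- not, as you suggest, to make the main term of the moment bound dominate; only the $\L_2$ characters enter the fourth-moment estimate, where $k_\chi^{-0.49}\leq h^{-1.47}$ is what produces the exponent. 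A repaired version of your idea does exist: summing the identity above over $i$ shows some single $\chi$ (depending on $x$) satisfies $\bigl|\sum_{i<h}\chi(x+i)\bigr|\geq h/\tau(M)$; Burgess plus the hypothesis forces such a $\chi$ to have conductor $>h^3$; then Chebyshev and Lemma \ref{lem4.2} with $r=2$ give, after summing over the at most $\tau(M)$ candidate characters, a bound $\ll M\tau(M)^5h^{-1.47}$ (slightly worse than the paper's $\tau(M)^4$ from H\"older). But this repair requires both the exact identity and the conductor split, neither of which appears in your write-up.
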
 

\begin{proof} Let $\L$ be the set of nontrivial real characters modulo $M$, and $\chi_p$ be the Legendre symbol modulo $p$. For any quadratic nonresidue $x\in\Z/M\Z$ we have 
$$
\sum_{\chi\in\L}\chi(x)=\prod_{p|M}(1+\chi_p(x))-1=-1;
$$
therefore, for any $x\in N_M(h)$ 
\begin{equation}\label{4.1} 
\left|\sum_{\chi\in\L}\sum_{i=0}^{h-1}\chi(x+i)\right|=h.
\end{equation}
Let $k(\chi)$ denote the conductor of $\chi\in\L$. We define
$$
\L_1=\{\chi\in\L: k(\chi)\leq h^3\}, \quad \L_2=\{\chi\in L: k(\chi)>h^3\}.
$$
First, we want to show that
\begin{equation}\label{4.2} 
\sum_{\chi\in\L_1}\left|\sum_{i=0}^{h-1}\chi(x+i)\right|\leq h/2.
\end{equation}
For any $\chi\in\L_1$ Lemma \ref{lem4.1} implies
$$
\left|\sum_{i=0}^{h-1}\chi(x+i)\right|\ll_{r,\e} h^{1-1/r}h^{3(r+1)/(4r^2)+3\e}.
$$
Taking $r=6$ and $\e=10^{-5}$, we obtain
$$
\sum_{\chi\in\L_1}\left|\sum_{i=0}^{h-1}\chi(x+i)\right|\leq Ch^{1-1/48+0.0003}|\L_1|\leq Ch^{0.98}|\L_1|
$$
for some absolute constant $C>0$, and (\ref{4.2}) follows from $C|\L_1|\leq C|\L|=C\tau(M)\leq 0.5h^{0.02}$ (the last inequality here is equivalent to the assumption of the lemma).

From (\ref{4.1}) and (\ref{4.2}) we see that
$$
\sum_{\chi\in\L_2}\left|\sum_{i=0}^{h-1}\chi(x+i)\right|\geq h/2,
$$
for all $x\in N_M(h)$, and thus
\begin{equation}\label{4.3} 
\sum_{x\in N_M(h)}\sum_{\chi\in\L_2}\left|\sum_{i=0}^{h-1}\chi(x+i)\right|\geq |N_M(h)|h/2.
\end{equation}
On the other hand, for any $\chi\in\L_2$ we have from Lemma \ref{lem4.2} (with $\e=0.01$)
\begin{multline*}
\sum_{x\in N_M(h)}\left|\sum_{i=0}^{h-1}\chi(x+i)\right|^{2r}\leq \sum_{x\in\Z/M\Z}\left|\sum_{i=0}^{h-1}\chi(x+i)\right|^{2r}\\ \ll \frac{M}{k_{\chi}} \sum_{x\in \Z/k_{\chi}\Z}\left|\sum_{i=0}^{h-1}\chi(x+i)\right|^{2r}\ \ll_r Mh^r+Mh^{2r}k_{\chi}^{-0.49}.
\end{multline*}
Setting now $r=2$, for all $\chi\in\L_2$ we have
$$
\sum_{x\in N_M(h)}\left|\sum_{i=0}^{h-1}\chi(x+i)\right|^{4} \ll Mh^{2.53}.
$$
Using H\"older's inequality, we find from (\ref{4.3})
$$
|N_M(h)|h \ll \sum_{\chi\in\L_2}\sum_{x\in N_M(h)}\left|\sum_{i=0}^{h-1}\chi(x+i)\right| \ll \tau(M)|N_M(h)|^{3/4}(Mh^{2.53})^{1/4},
$$
and, hence,
$$
|N_M(h)|\ll M\tau(M)^4h^{-1.47},  
$$
as desired.
\end{proof}

\begin{rem}\label{rem4.4}
One can deduce from this argument that the size of the maximal gap in the set $R_M$ is at most $O(M^{1/3})$, but we do not need this estimate here.	
\end{rem}

Let $R_M=\{0=r_1<r_2<\ldots<r_{|R_M|}\}$ be the set of quadratic residues modulo $M$. 

\begin{lem}\label{lem4.5}
We have
$$
\sum_{2^{\nu+1}\leq r_i-r_{i-1}<2^{\nu+2}}
(r_i-r_{i-1})^2 \leq |N_M(2^\nu)|2^{\nu+4}.
$$
\end{lem}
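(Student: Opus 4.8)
The plan is to exploit the elementary fact that a long gap between consecutive quadratic residues forces a long run of consecutive non-residues, and that such a run contains many translates lying in $N_M(2^\nu)$.

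Fix $\nu$ and consider a single gap with $g := r_i - r_{i-1}$ satisfying $2^{\nu+1} \leq g < 2^{\nu+2}$. By definition of consecutive residues, each of the $g-1$ interior points $r_{i-1}+1, r_{i-1}+2, \ldots, r_i - 1$ is a quadratic non-residue modulo $M$. First I would count the starting points $x$ of windows of length $2^\nu$ that fit entirely inside this run of non-residues: these are exactly the integers $x$ with $r_{i-1} + 1 \leq x \leq r_i - 2^\nu$, so there are $g - 2^\nu$ of them, and each satisfies $x, x+1, \ldots, x+2^\nu-1 \in \Z_M \setminus R_M$, i.e. $x \in N_M(2^\nu)$. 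The key observation is that these windows all lie in the open interval $(r_{i-1}, r_i)$, so the sets of starting points attached to distinct gaps are pairwise disjoint subsets of $N_M(2^\nu)$.

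Summing the disjointness over all gaps counted on the left-hand side then yields $\sum (g - 2^\nu) \leq |N_M(2^\nu)|$, where the sum runs over gaps with $2^{\nu+1} \leq g < 2^{\nu+2}$. It remains to pass from $g - 2^\nu$ to $g^2$. Since $g \geq 2^{\nu+1}$ gives $2^\nu \leq g/2$ and hence $g - 2^\nu \geq g/2$, and since $g < 2^{\nu+2}$, we obtain the pointwise bound $g^2 \leq 2^{\nu+2} g \leq 2^{\nu+2} \cdot 2(g - 2^\nu) = 2^{\nu+3}(g-2^\nu)$. Summing this estimate and inserting the disjointness bound gives $\sum g^2 \leq 2^{\nu+3}|N_M(2^\nu)| \leq 2^{\nu+4}|N_M(2^\nu)|$, which is the claim (in fact with a factor to spare).

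There is no serious obstacle here; the only points requiring care are verifying that $g - 2^\nu \geq 1$, so that at least one admissible window exists (which follows from $g \geq 2^{\nu+1} > 2^\nu$), and confirming the disjointness of the window-sets across different gaps, which is immediate since the interiors of distinct gaps are disjoint. I would state these two elementary facts explicitly and then conclude with the two-line arithmetic above.
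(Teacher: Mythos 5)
Your proof is correct and follows essentially the same approach as the paper: each gap of length at least $2^{\nu+1}$ contains a run of non-residues whose length-$2^\nu$ subwindows give disjoint elements of $N_M(2^\nu)$, and this count is then converted into a bound on $\sum (r_i-r_{i-1})^2$. The only difference is bookkeeping — the paper counts exactly $2^\nu$ witnesses per gap and multiplies the resulting bound on the number of gaps by $(2^{\nu+2})^2$, while you count all $g-2^\nu$ witnesses and use the pointwise inequality $g^2 \leq 2^{\nu+3}(g-2^\nu)$, which even saves a factor of $2$.
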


\begin{proof} If $r_i-r_{i-1}\geq 2^{\nu+1}$, then for each $1\leq k\leq 2^{\nu}$, the element $r_{i-1}+k$ belongs to $N_M(2^{\nu})$. Therefore the number of such $i$ is at most $2^{-\nu}|N_M(2^{\nu})|$. The claim follows. 
\end{proof}

Now we are in position to combine the estimates for $|N_M(h)|$ with the results from the previous section to get the desired expression for $S(R_M)$. We start with the situation of Theorem \ref{th1.4}.

\begin{cor}\label{cor4.6}
Let $M=Ap$, where $A$ is square-free, $A\leq p^{0.1}$, and $p$ is large enough. Then	
$$
S(R_M) = 0.25g_A(0.5)p+O\left(A^4p^{0.95}\right).
$$
\end{cor}

\begin{proof} Let $\e_0=\frac{1}{2\cdot4.47}=0.11...$ and $D=A|R_A|^{-1}p^{\e_0}$. Then $A\leq 0.1p^{\e_0}$, and the equality (\ref{3.0}) together with Corollary \ref{cor3.3} gives us
\begin{equation}\label{4.4}
S(R_M)=0.25g_A(0.5)p+O\left(A^4p^{1/2+4\e_0}+\sum_{l>D}K_l(M)l^2\right). 
\end{equation}	
Using Lemma \ref{lem4.5}, we obtain
$$
\sum_{l>D}K_l(M)l^2 \leq \sum_{\nu\,:\, 2^{\nu}>D}\sum_{2^{\nu-1}\leq r_j-r_{j-1}<2^{\nu}}(r_j-r_{j-1})^2 \ll \sum_{\nu\,:\, 2^{\nu}>D}|N_M(2^{\nu-2})|2^{\nu}. 
$$
Let $C$ be the constant from Lemma \ref{4.3}. Since $\tau(n)\ll_{\e}n^{\e}$ for any $\e>0$ and $p$ is large enough, we have $(4C\tau(A))^{50} \leq 0.25p^{\e_0} \leq 0.25D$. Thus $2^{\nu}>D$ implies $2^{\nu-2}>(2C\tau(M))^{50}$. Now Lemma \ref{lem4.3} gives us
$$
\sum_{l>D}K_l(M)l^2\ll M\tau(M)^4\sum_{\nu\,:\, 2^{\nu+2}>D}2^{-0.47\nu} \ll A\tau(A)^4pD^{-0.47} \ll A^2p^{1-0.47\e_0}.
$$
Combining this with (\ref{4.4}) and taking into account that $1-0.47\e_0=1/2+4\e_0<0.95$, we get the desired result.
\end{proof}

Now we turn to the situation of Theorem \ref{th1.1} and work with $M\in\O$. 

\begin{cor}\label{cor4.7}
Let $M=Am\in \O$ and $t=\o(m)$. Then	
$$
S(R_M) = m2^{-2t}g_A(y_t) + O(M(\log M)^{-1}).
$$	
\end{cor}

\begin{proof}
We set $D=0.11A|R_A|^{-1}2^t\log p_1$ (where, as usual, $p_1=P^-(m)$)	
and $D'=2^{51\o(M)}$; note that $D<D'<0.5p_1$ provided that the constant $C_0$ from the definition of $\O$ is large enough. Corollary \ref{cor3.4} allows us to write
\begin{multline}\label{4.5}
S(R_M)=\sum_{l\leq D}K_l(M)l^2+\sum_{D<l\leq D'}K_l(M)l^2 + \sum_{l>D'}K_l(M)l^2 \\ =m2^{-2t}g_A(y_t) + O\left(MA^32^tp_1^{-0.1}+\sum_{D<l\leq D'}K_l(M)l^2 + \sum_{l>D'}K_l(M)l^2\right),	
\end{multline}
and it remains to estimate the sums corresponding to ``average'' and ``large'' gaps. 

We begin with the contribution of average gaps. Lemma \ref{lem4.5} gives us	
\begin{equation}\label{4.6}
\sum_{D<l\leq D'} K_l(M)l^2 \leq \sum_{\nu: D<2^{\nu}\leq 2D'}\sum_{2^{\nu-1}\leq r_j-r_{j-1}<2^{\nu}}(r_j-r_{j-1})^2 \ll \sum_{\nu: D<2^{\nu}\leq 2D'}|N_M(2^{\nu-2})|2^{\nu}. 
\end{equation}
Now we fix $\nu$ with $D<2^{\nu}\leq 2D'$. To estimate $|N_M(2^{\nu-2})|$, let us take $x\in N_M(2^{\nu-2})$ and let $x\equiv a\pmod A$. We define
$$
C_a=\left\{0\leq i\leq 2^{\nu-2}-1: a+i \pmod{A} \in R_A \right\}; 
$$
then $x\pmod m \in N_m(C_a)$. Further, all elements of $C_a$ do not exceed $2D'<p_1$,
$$
|C_a|\geq |R_A|\cdot\lfloor 2^{\nu-2}A^{-1}\rfloor \geq |R_A|A^{-1}2^{\nu-2}-|R_A|\geq 0.02\cdot2^t\log p_1
$$
Using Corollary \ref{cor2.5}, we obtain
$$
|N_M(h)|\leq \sum_{a=1}^{A}|N_m(C_a)| \ll Amp_1^{-0.02},
$$
and from (\ref{4.5}) we have
\begin{equation}\label{4.7}
\sum_{D<l\leq D'} K_l(M)l^2 \ll MD'p_1^{-0.02}.	
\end{equation}

Finally, we work with the gaps of length greater than $D'$. As before, Lemma \ref{lem4.5} gives us	
$$
\sum_{l>D'}K_l(M)l^2 \leq \sum_{\nu\,:\, 2^{\nu}>D'}\sum_{2^{\nu-1}\leq r_j-r_{j-1}<2^{\nu}}(r_j-r_{j-1})^2 \ll \sum_{\nu\,:\, 2^{\nu}>D'}|N_M(2^{\nu-2})|2^{\nu}. 
$$
The inequality $2^{\nu}>D'=2^{51\o(M)}$ together with the fact that $\tau(M)=2^{\o(M)}$ is large enough (which is due to the definition of the set $\O$) implies $2^{\nu-2}>(2C\tau(M))^{50}$. Now an appeal to Lemma \ref{lem4.3} shows that
\begin{equation}\label{4.8}
\sum_{l>D'}K_l(M)l^2 \ll M2^{4\o(M)}\sum_{\nu: 2^{\nu}>D'}2^{-0.47\nu} \ll M2^{-19\o(M)}.
\end{equation}

Combining Corollary (\ref{4.5}), (\ref{4.7}), and (\ref{4.8}), we obtain
$$
S(R_M) = m2^{-2t}g_A(y_t) +O\left(MA^32^tp_1^{-0.1}+M2^{51\o(M)}p_1^{-0.02}+M2^{-19\o(M)}\right).	
$$
Recall that $M\in\O$ implies $\o(M)/\log\log M\in[0.9,1.1]$, $A\leq 2^t\leq (\log M)^{1.1\log2}$, and $p_1>(\log M)^{C_0}$. Now by taking $C_0$ large enough we conclude the proof. 
\end{proof}

\section{Completion of the proofs of Theorems \ref{th1.1} and \ref{th1.4}}\label{Sec5}

Having proved Corollaries \ref{cor4.6} and \ref{cor4.7}, we found the assumed asymptotics for $S(R_M)$ in terms of the function $g_A$. Now we are aiming to extract further information about it. Recall that for any $y\in(0,1)$
\begin{equation}\label{5.1}
g_A(y)=\sum_{j=1}^{A}\sum_{i\in R_A\cap(R_A-j)}y^{r(i,j)}\sum_{k\geq 0}y^{|R_A|k}(Ak+j)^2;
\end{equation}
We begin with the following simple lemma, which allows us to simplify this expression.

\begin{lem}\label{lem5.1}
For all $x\in(0,1)$ 
\begin{equation*} 
\sum_{n=0}^{\infty}x^n=\frac{1}{1-x}, \qquad \sum_{n=1}^{\infty}nx^{n}=\frac{x}{(1-x)^2},
\qquad \sum_{n=1}^{\infty}n^2x^{n}=\frac{x(1+x)}{(1-x)^3}.
\end{equation*}
\end{lem} 

\begin{proof} This can be obtained by standard direct calculations.
\end{proof}

We apply this lemma to the inner sum in the definition (\ref{5.1}) of the function $g_A$. We have
\begin{equation}\label{5.2} 
g_A(y)=\frac{A^2y^{|R_A|}(1+y^{|R_A|})P_0(y)}{(1-y^{|R_A|})^3}+\frac{2Ay^{|R_A|}P_1(y)}{(1-y^{|R_A|})^2} +\frac{P_2(y)}{1-y^{|R_A|}}, 
\end{equation}
where (here and in what follows, for $j\in\Z$ and $i\in R_A$, we write for brevity $i+j\in R_A$ instead of $i+j \!\pmod{A} \in R_A$)
$$
P_0(y)=\sum_{i\in R_A}\sum_{\substack{1\leq j\leq A\\i+j\,\in R_A}}y^{r(i,j)},
$$
$$
P_1(y)=\sum_{i\in R_A}\sum_{\substack{1\leq j\leq A\\i+j\,\in R_A}}jy^{r(i,j)},
$$
$$
P_2(y)=\sum_{i\in R_A}\sum_{\substack{1\leq j\leq A\\i+j\,\in R_A}}j^2y^{r(i,j)}.
$$

Let $s_1,\ldots,s_{|R_A|}$ be the distances between consecutive quadratic residues modulo $A$; here and further we consider the indices of $s_i$ modulo $|R_A|$. For $k=0,...,|R_A|-1$ we set 
\begin{equation}\label{5.3}
\a_k=\sum_{i=1}^{|R_A|}\left(\sum_{l=i}^{i+k}s_l\right)^2.
\end{equation}
In particular, $\a_0=\sum_{i=1}^{|R_A|}s_i^2=S(R_A)$ and $\a_{|R_A|-1}=A^2|R_A|$. 

Now we compute the coefficients of the polynomials $P_0$, $P_1$, and $P_2$.

\begin{lem}\label{lem5.2}
We have $P_0(y)=|R_A|\sum_{k=0}^{|R_A|-1}y^k$, $P_1(y)=A\sum_{k=0}^{|R_A|-1}(k+1)y^k$, and $P_2(y)=\sum_{k=0}^{|R_A|-1}\a_ky^k$. 
\end{lem}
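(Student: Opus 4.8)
The plan is to reparametrize the inner sum in each $P_m$ by the number of quadratic residues lying strictly between $i$ and $i+j$. Fix $i\in R_A$ and suppose that $i$ is the $a$-th quadratic residue modulo $A$ in increasing cyclic order. As $j$ runs over those $1\le j\le A$ with $i+j\in R_A$ (that is, over $R_A-i$), I would index such $j$ by $k=0,1,\ldots,|R_A|-1$, where $k$ records that $i+j$ is the $(k+1)$-th quadratic residue after $i$ in cyclic order. For such a $j$ the set $I(i,j)$ contains exactly the $k$ residues strictly between $i$ and $i+j$, so $r(i,j)=k$ by (\ref{3.1}); moreover the gap is a sum of $k+1$ consecutive distances,
\[
j=\sum_{l=a}^{a+k}s_l\qquad(\text{indices modulo }|R_A|).
\]
The extreme value $k=|R_A|-1$ gives $j=\sum_{l=1}^{|R_A|}s_l=A$, matching $i+A\equiv i\in R_A$.

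Using this bijection, I would interchange the order of summation in the defining sums for $P_0$, $P_1$, $P_2$ so that the sum over $k$ sits on the outside. For $P_0$ the summand $y^{r(i,j)}=y^k$ no longer depends on $i$, so each $k$ contributes $|R_A|$ terms and we obtain $P_0(y)=|R_A|\sum_{k=0}^{|R_A|-1}y^k$ at once. For $P_1$ the inner sum becomes $\sum_{a=1}^{|R_A|}\sum_{l=a}^{a+k}s_l$; by cyclic symmetry each distance $s_l$ is counted once for every window of length $k+1$ that contains it, hence exactly $k+1$ times, so this equals $(k+1)\sum_{l=1}^{|R_A|}s_l=(k+1)A$ and yields $P_1(y)=A\sum_{k=0}^{|R_A|-1}(k+1)y^k$. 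For $P_2$ the inner sum is $\sum_{a=1}^{|R_A|}\bigl(\sum_{l=a}^{a+k}s_l\bigr)^2$, which is precisely $\a_k$ by its definition, giving $P_2(y)=\sum_{k=0}^{|R_A|-1}\a_k y^k$.

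The only step requiring genuine care is the reparametrization itself: one must check that $j\mapsto k$ is a bijection from $\{\,1\le j\le A:i+j\in R_A\,\}$ onto $\{0,1,\ldots,|R_A|-1\}$ and that it simultaneously realizes $r(i,j)=k$ and $j=\sum_{l=a}^{a+k}s_l$. This is a routine but attentive unwinding of the cyclic definitions of $r(i,j)$ and of the consecutive distances $s_l$, where the wrap-around term $s_{|R_A|}=A-r_{|R_A|}$ must be handled so that $j=A$ (i.e.\ $k=|R_A|-1$) is included exactly once. After that, the evaluations of $P_0$, $P_1$, and $P_2$ are immediate, the $P_1$ identity relying only on the cyclic double-counting $\sum_{a}\sum_{l=a}^{a+k}s_l=(k+1)A$.
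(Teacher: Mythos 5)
Your proposal is correct and follows essentially the same route as the paper's proof: reindexing the inner sum by $k=r(i,j)$, noting that for fixed $i\in R_A$ this takes each value $0,1,\ldots,|R_A|-1$ exactly once with $j$ equal to the corresponding sum of $k+1$ consecutive distances, then evaluating $P_0$, $P_1$ (via the cyclic double-counting giving $(k+1)A$), and $P_2$ (via the definition of $\a_k$). The paper asserts the bijection more tersely ("clearly ... exactly once"), whereas you spell out the wrap-around at $k=|R_A|-1$, but the substance is identical.
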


\begin{proof} Recall that $r(i,j)=\#((i,i+j)\cap R_A)$ (see (\ref{3.1})). For any $s\in\{0,1,2\}$, we have
$$
P_s(y)=\sum_{k=0}^{|R_A|-1}y^k\sum_{i\in R_A}\sum_{\substack{1\leq j\leq A\\j+i\,\in R_A}}j^s1_{\#((i,i+j)\cap R_A)=k)}
=\sum_{k=0}^{|R_A|-1}y^k\sum_{i=1}^{|R_A|}\left(\sum_{l=i}^{i+k}s_l\right)^s.
$$
Now if $s=0$, then we obtain $P_0(y)=|R_A|\sum_{k=0}^{|R_A|-1}y^k$. If $s=1$, then $\sum_{i=1}^{|R_A|}\sum_{l=i}^{i+k}s_l=\sum_{l=1}^{|R_A|}s_l(k+1)=(k+1)A$, and thus $P_1(y)=A\sum_{k=0}^{|R_A|-1}(k+1)y^k$. Finally, if $s=2$, then $P_2(y)=\sum_{k=0}^{|R_A|-1}\a_ky^k$ by the definition (\ref{5.3}) of the numbers $\a_k$. This completes the proof.
\end{proof}

Lemma \ref{lem5.2} gives us    
$$(1-y)P_0(y)=|R_A|(1-y^{|R_A|})
$$
and
$$
(1-y)^2P_1(y)=A(1-y)\left(\sum_{k=0}^{|R_A|-1}y^k-|R_A|y^{|R_A|}\right)=
A(1-y^{|R_A|})-A|R_A|(1-y)y^{|R_A|}.
$$
Hence
\begin{equation}\label{5.4}
\frac{A^2y^{|R_A|}(1+y^{|R_A|})(1-y)^3P_0(y)}{(1-y^{|R_A|})^3}=
\frac{A^2|R_A|y^{|R_A|}(1+y^{|R_A|})(1-y)^2}{(1-y^{|R_A|})^2} 
\end{equation}
and 
\begin{equation}\label{5.5} 
\frac{2Ay^{|R_A|}(1-y)^3P_1(y)}{(1-y^{|R_A|})^2}=
\frac{2A^2y^{|R_A|}(1-y)}{1-y^{|R_A|}}-
\frac{2A^2|R_A|y^{2|R_A|}(1-y)^2}{(1-y^{|R_A|})^2}. 
\end{equation}
Now we compute the function $f_A(y)=(1-y)^3g_A(y)$. From (\ref{5.2}), (\ref{5.4}), and  (\ref{5.5}) we have
\begin{multline}\label{5.6} 
f_A(y)=\frac{A^2|R_A|(1-y)^2y^{|R_A|}}{1-y^{|R_A|}}+
\frac{2A^2(1-y)y^{|R_A|}}{1-y^{|R_A|}}+
\frac{(1-y)^3P_2(y)}{1-y^{|R_A|}}\\
=\frac{A^2y^{|R_A|}(2+|R_A|(1-y))+(1-y)^2P_2(y)}{1+y+\ldots+y^{|R_A|-1}}. 
\end{multline}

We continue working with the function $f_A$. Recall that we think of the indices of $s_i$ as elements of $\Z/|R_A|\Z$.

\begin{lem}\label{lem5.3} 
We have
$$
f_A(y)=\frac{F(y)}{Q(y)}, 
$$
where $Q(y)=Q_A(y)=1+y+\ldots+y^{|R_A|-1}$ and $F(y)=F_A(y)=\sum_{k=0}^{|R_A|}\b_ky^k$ is the reciprocal polynomial with the coefficients $\b_0=\b_{|R_A|}=\sum_is_i^2$ and $\b_k=2\sum_{i=1}^{|R_A|}s_is_{i+k}$ 
for $0<k<|R_A|$. 

\end{lem}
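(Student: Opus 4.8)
The plan is to reduce the lemma to a single polynomial identity. By (\ref{5.5}) the denominator of $f_A$ is already $Q(y)=1+y+\cdots+y^{|R_A|-1}$, so it suffices to show that the numerator
$$
F(y)=A^2 y^{|R_A|}\bigl(2+|R_A|(1-y)\bigr)+(1-y)^2P_2(y)
$$
equals $\sum_{k=0}^{|R_A|}\beta_k y^k$ with the stated coefficients. Writing $K=|R_A|$ and introducing the autocorrelations $c_d=\sum_{i=1}^{K}s_is_{i+d}$ (indices mod $K$), so that $\beta_0=c_0=\sum_i s_i^2$ and $\beta_d=2c_d$ for $0<d<K$, I would first record two facts used throughout: $c_d=c_{K-d}$ by periodicity, and $\sum_{d=0}^{K-1}c_d=\sum_i s_i\sum_{d=0}^{K-1}s_{i+d}=A^2$.

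The combinatorial heart of the argument is to re-express $\alpha_k=\sum_{i=1}^{K}\bigl(\sum_{l=i}^{i+k}s_l\bigr)^2$ through the $c_d$. Squaring the inner sum and grouping the cross terms by their lag $d=l'-l$ gives $\bigl(\sum_{l=i}^{i+k}s_l\bigr)^2=\sum_{l=i}^{i+k}s_l^2+2\sum_{d=1}^{k}\sum_{l=i}^{i+k-d}s_ls_{l+d}$. Summing over $i$ and observing that, for $0\le k\le K-1$, each gap index is covered with multiplicity exactly $k+1-d$ by the window of lag $d$, one obtains the clean formula
$$
\alpha_k=\sum_{d=0}^{k}(k+1-d)\beta_d,\qquad 0\le k\le K-1.
$$
The structural consequence I need is that the second difference of this sequence is $\beta$: with $B_k=\sum_{d=0}^{k}\beta_d$ one checks $\alpha_k-\alpha_{k-1}=B_k$ for $1\le k\le K-1$, hence $\alpha_k-2\alpha_{k-1}+\alpha_{k-2}=\beta_k$ for $2\le k\le K-1$.

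With this in hand I would read off the coefficients of $F$. The coefficient of $y^n$ in $(1-y)^2P_2(y)$ is the second difference $\alpha_n-2\alpha_{n-1}+\alpha_{n-2}$ (setting $\alpha_j:=0$ outside $0\le j\le K-1$), which by the previous step equals $\beta_n$ for all $0\le n\le K-1$; the edge cases $n=0,1$ follow directly from $\alpha_0=\beta_0$ and $\alpha_1-2\alpha_0=\beta_1$. It remains to treat the two top degrees, where the truncation of $P_2$ interacts with $A^2 y^{K}(2+K(1-y))=(2+K)A^2 y^{K}-KA^2 y^{K+1}$. Using $\alpha_{K-1}=A^2K$ (recorded just before Lemma \ref{lem5.2}) together with $\alpha_{K-1}-\alpha_{K-2}=B_{K-1}=2A^2-c_0$, a short computation gives $[y^{K+1}]F=\alpha_{K-1}-KA^2=0$ and $[y^{K}]F=(-2\alpha_{K-1}+\alpha_{K-2})+(2+K)A^2=c_0=\beta_K$. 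Hence $F(y)=\sum_{k=0}^{K}\beta_k y^k$, and the reciprocity $\beta_k=\beta_{K-k}$ is immediate from $c_d=c_{K-d}$.

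I expect the main obstacle to be the pair-counting step producing $\alpha_k=\sum_{d=0}^{k}(k+1-d)\beta_d$: one must correctly determine the multiplicity of each lag inside the cyclic window and verify that the restriction $0\le k\le K-1$ prevents the windows from wrapping around and over-counting. Once that identity and its second-difference consequence are in place, the remaining boundary bookkeeping at degrees $K$ and $K+1$ is routine.
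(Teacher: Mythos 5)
Your proposal is correct and follows essentially the same route as the paper: both expand $\alpha_k$ as a weighted sum of the autocorrelations $\sum_i s_is_{i+d}$ (your $\alpha_k=\sum_{d=0}^{k}(k+1-d)\beta_d$ is exactly the paper's displayed identity), extract the middle coefficients via second differences, and settle degrees $|R_A|$ and $|R_A|+1$ using $\alpha_{|R_A|-1}=A^2|R_A|$. The only cosmetic difference is at the top end, where the paper computes $\alpha_{|R_A|-2}=\sum_i(A-s_i)^2$ directly while you use the telescoping identity $\alpha_{|R_A|-1}-\alpha_{|R_A|-2}=2A^2-\sum_i s_i^2$; these are equivalent.
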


\begin{proof} From (\ref{5.6}) and Lemma \ref{lem5.2} we see that
\begin{equation*}
F(y)=
(1-2y+y^2)\sum_{k=0}^{|R_A|-1}\a_ky^k+A^2(|R_A|+2)y^{|R_A|}-A^2|R_A|y^{|R_A|+1}.
\end{equation*}
Now we write $F(y)=\sum_{k=0}^{|R_A|+1}\g_ky^k$ and compute $\g_k$. Firstly, by the definition (\ref{5.3}) of the numbers $\a_k$,
$$
\g_0=\a_0=\sum_{i=1}^{|R_A|}s_i^2,
$$
and
$$\g_1=\a_1-2\a_0=\sum_{i=1}^{|R_A|}(s_i+s_{i+1})^2-2\sum_{i=1}^{|R_A|}s_i^2=2\sum_{i}s_is_{i+1}.
$$
Further, we observe that 
\begin{equation*} \a_k=\sum_i(s_i+\ldots+s_{i+k})^2=
(k+1)\sum_is_i^2+2\sum_{l=1}^k(k+1-l)\sum_{i}s_is_{i+l}.
\end{equation*} 
Therefore, for $2\leq k\leq |R_A|-1$,   
$$
\g_k=\a_{k-2}-2\a_{k-1}+\a_k=2\sum_is_is_{i+k},
$$
as desired. It remains to calculate $\g_{|R_A|}$ and $\g_{|R_A|+1}$. Recall that $\a_{|R_A|-1}=A^2|R_A|$. Hence
$$
\g_{|R_A|+1}=\a_{|R_A|-1}-A^2|R_A|=0
$$   
and
$$
\g_{|R_A|}=-2\a_{|R_A|-1}+\a_{|R_A|-2}+A^2(|R_A|+2)=\a_{|R_A|-2}+2A^2-A^2|R_A|.
$$
Finally,
\begin{multline*}
\a_{|R_A|-2}=\sum_{i}\left(\sum_{l=i}^{i+|R_A|-2}s_l\right)^2=\sum_{i=1}^{|R_A|}(A-s_i)^2\\
=A^2|R_A|-2A\sum_is_i+\sum_is_i^2=A^2|R_A|-2A^2+\a_0,
\end{multline*} 
and therefore $\g_{|R_A|}=\g_0=\sum_is_i^2$. Thus $F(y)=\sum_{k=0}^{|R_A|}\b_ky^k$, as desired.
\end{proof}

Since $f_A(y)=(1-y)^3g_A(y)$, we have $g_A(0.5)=8f_A(0.5)$ and $g_A(y_t)=2^{3t}f_A(y_t)$ for $y_t=1-2^{-t}$. Now Theorem \ref{th1.4} follows immediately from Corollary \ref{cor4.6}; we can also rewrite Corollary \ref{cor4.7} as  
\begin{equation}\label{5.7}
S(R_M)=m2^tf_A(y_t)+O(M(\log M)^{-1})
\end{equation}
for $M\in\O$. To complete the proof of Theorem \ref{th1.1}, we are going to expand the function $f_A$ in a Taylor series at the point $y=1$.

We see from Lemma \ref{lem5.3} that $f_A$ obeys the functional equation
$$
f_A(y)=yf_A\left(\frac1y\right).
$$
We then obtain
\begin{equation}\label{5.8}
f'_A\left(\frac1y\right)=f_A(y)-yf_A'(y)
\end{equation}
and
\begin{equation}\label{5.9}
f_A''\left(\frac1y\right)=y^3f''_A(y).
\end{equation}
Also from (\ref{5.6}) we have
\begin{equation}\label{5.10}
f_A(1)=\frac{2A^2}{|R_A|},
\end{equation}
and now (\ref{5.8}) immediately implies $f'_A(1)=0.5f_A(1)=A^2|R_A|^{-1}$. Then by the Taylor expansion 
\begin{equation}\label{5.11}
m2^tf_A(y_t)=m2^{t+1}A^2|R_A|^{-1}-mA^2|R_A|^{-1}+\frac12f''_A(\theta_{A,t})m2^{-t}
\end{equation}
for some $\theta_{A,t}\in(1-2^{-t},1)$, and it remains to estimate the second derivative of $f_A$. 

\begin{lem}\label{lem5.4}
Suppose that $|R_A|\leq 2^{t}$. Then
$$
f''_A(y)\ll A^2|R_A|
$$
uniformly for $y\in(y_t,1)$.
\end{lem}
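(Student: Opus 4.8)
The plan is to start from the representation $f_A=F/Q$ of Lemma~\ref{lem5.3} and differentiate the quotient twice. A direct computation gives
\[
f_A''=\frac{F''}{Q}-\frac{2Q'F'}{Q^2}+\frac{(2Q'^2-QQ'')F}{Q^3},
\]
so it suffices to bound each of these three terms by $O(A^2|R_A|)$ uniformly for $y\in(y_t,1)$. I would treat the factors coming from $Q$ and from $F$ separately, using that the hypothesis $|R_A|\le 2^t$ forces $(1-y)|R_A|\le |R_A|2^{-t}\le 1$ throughout the interval, so that every point of $(y_t,1)$ is ``close to $1$'' on the scale $1/|R_A|$.

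First I would estimate the denominator factors. Writing $N=|R_A|$ for brevity, on $(y_t,1)$ we have $y^{N-1}\ge(1-2^{-t})^{N-1}\gg 1$ by the above, whence $Q(y)=\sum_{k=0}^{N-1}y^k\asymp N$; crude term-by-term bounds give $Q'(y)\ll N^2$ and $Q''(y)\ll N^3$, and therefore $|2Q'^2-QQ''|\ll N^4$. I note that no cancellation in the combination $2Q'^2-QQ''$ is needed for the argument to go through.

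The heart of the matter is to bound $F$, $F'$, $F''$, and here the \emph{key observation} is that every coefficient $\beta_k=2\sum_i s_is_{i+k}$ is nonnegative, since the gaps $s_i$ are nonnegative. Consequently $F$ and all of its derivatives have nonnegative coefficients, so for $y\in(0,1]$ the quantities $F(y),F'(y),F''(y)$ are nondecreasing and hence bounded by their values at $y=1$. Combining $F(1)=2A^2$ (from (\ref{5.9}) together with $Q(1)=N$) with the trivial inequalities $F'(1)=\sum_k k\beta_k\le N\,F(1)$ and $F''(1)=\sum_k k(k-1)\beta_k\le N^2F(1)$ yields $F(y)\ll A^2$, $F'(y)\ll A^2N$, and $F''(y)\ll A^2N^2$ on the interval. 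The point worth stressing, and the only genuinely delicate step, is that the naive bound $|F(y)|\le\sum_k|\beta_k|$ overcounts by a factor of $N$ and would only give $f_A''\ll A^2N^2$; it is precisely the positivity of the $\beta_k$ that recovers the sharper $F(y)\ll A^2$ and is the crux of the estimate.

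Finally I would assemble the three terms using the bounds above: $F''/Q\ll A^2N^2/N=A^2N$; $Q'F'/Q^2\ll N^2\cdot A^2N/N^2=A^2N$; and $(2Q'^2-QQ'')F/Q^3\ll N^4\cdot A^2/N^3=A^2N$. Adding these gives $f_A''(y)\ll A^2N=A^2|R_A|$ for all $y\in(y_t,1)$, as claimed; everything apart from the positivity argument is routine monotone and term-by-term estimation.
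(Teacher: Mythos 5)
Your proof is correct, and it follows the same basic skeleton as the paper's — differentiate the quotient $F/Q$ twice, bound $Q,Q',Q''$ and $F,F',F''$ term by term, and use the identity $F(1)=f_A(1)Q(1)=2A^2$ from (\ref{5.9}) — but it differs in one structural respect. The paper first invokes the functional equation (\ref{5.8}), $f_A''(1/y)=y^3f_A''(y)$, to transfer the problem from $(y_t,1)$ to the interval $1\leq y\leq 1+O(2^{-t})$, and only there argues that $y^k\asymp1$ for $0\leq k\leq|R_A|$ (which is where the hypothesis $|R_A|\leq 2^t$ enters), obtaining $Q\asymp|R_A|$, $Q'\asymp|R_A|^2$, $Q''\asymp|R_A|^3$ and the bounds on $F,F',F''$ via (\ref{5.11}). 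You skip the reflection entirely and work directly on $(y_t,1)$: there the upper bounds $Q'\ll|R_A|^2$, $Q''\ll|R_A|^3$, $F(y)\leq F(1)$, $F'(y)\leq F'(1)$, $F''(y)\leq F''(1)$ are immediate from $0<y<1$ and the nonnegativity of the coefficients $\beta_k$, and the hypothesis $|R_A|\leq2^t$ is needed only once, for the lower bound $Q(y)\gg|R_A|$ via $y^{|R_A|-1}\geq(1-2^{-t})^{2^t}\gg1$. This is a modest but genuine simplification: the functional-equation step in the paper is not actually necessary. One small correction to your commentary: the ``naive'' bound $|F(y)|\leq\sum_k|\beta_k|$ does not lose a factor of $|R_A|$ — since $\beta_k\geq0$, that sum \emph{equals} $F(1)=2A^2$; what would lose a factor of $|R_A|$ is estimating $\sum_k|\beta_k|$ by $(|R_A|+1)\max_k\beta_k$. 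The role of positivity is thus not to beat the naive bound but to identify $\sum_k|\beta_k|$ with the exactly computable quantity $F(1)$ (and likewise to make $F,F',F''$ monotone), which is indeed the point your argument, and the paper's, both turn on.
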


\begin{proof} Due to (\ref{5.9}), it suffices to prove the lemma for $1\leq y\leq 1+O(2^{-t})$ instead of $y\in(y_t,1)$. Firstly,
\begin{equation}\label{5.12}
\left(\frac{F(y)}{Q(y)}\right)''=\frac{F''(y)Q^2(y)-2F'(y)Q'(y)Q(y)-F(y)Q''(y)Q(y)+2F(y)Q'^2(y)}{Q^3(y)}.  
\end{equation}
Further, for $1\leq y\leq 1+O(2^{-t})$ by the assumption on $|R_A|$ we have $y^k \asymp 1$ uniformly for $0\leq k\leq |R_A|$ and thus
$$
Q(y)\asymp |R_A|, \qquad Q'(y)\asymp |R_A|^2, \qquad Q''(y) \asymp |R_A|^3.
$$ 
Taking into account (\ref{5.10}), we find
$$
F(y)\ll \sum_{k=0}^{|R_A|}\b_k=F(1)=f_A(1)Q(1)=2A^2,
$$
$$
F'(y)=\sum_{k=1}^{|R_A|}k\b_ky^{k-1}\ll A^2|R_A|, \qquad F''(y)\ll A^2|R_A|^2. 
$$
Thus for $1\leq y\leq 1+O(2^{-t})$ we have from (\ref{5.12}) 
$$
f_A''(y) \ll \frac{A^2|R_A|^4}{|R_A|^3}=A^2|R_A|,
$$
and the claim follows. 
\end{proof}

Now combining (\ref{5.7}), (\ref{5.11}), and the bound from Lemma \ref{lem5.4}, we get 
\begin{equation}\label{5.13}
S(R_M) = M2^{t+1}A|R_A|^{-1}-MA|R_A|^{-1} + O(M(\log M)^{-1}+MA|R_A|2^{-t}).
\end{equation}
Recalling that $M\in\O$ implies $A\leq (\log M)^{0.1}$ and $2^{-t}\leq2^{-(0.9+o(1))\log\log M}$, we get 
\begin{equation}\label{5.14}
A|R_A|2^{-t}\leq (\log M)^{-0.4}. 
\end{equation}
Further, since $\prod_{i=1}^t(1+p_i^{-1})\leq\exp(\sum_{i=1}^tp_i^{-1})=1+O(tp_1^{-1})$,
we have
\begin{equation*}
|R_M|=|R_A|\prod_{i=1}^t\frac{p_i+1}{2}=|R_A|m2^{-t}(1+O(tp_1^{-1})), 
\end{equation*}
and hence
\begin{equation*}
\frac{M}{|R_M|}=A|R_A|^{-1}2^t(1+O(tp_1^{-1}))=A|R_A|^{-1}2^t+O((\log M)^{1-C_0})
\end{equation*}
and
\begin{equation}\label{5.15}
\frac{2M^2}{|R_M|}=M2^{t+1}A|R_A|^{-1}+O(M(\log M)^{1-C_0}).
\end{equation}
Putting (\ref{5.13}), (\ref{5.14}), and (\ref{5.15}) together and recalling that $C_0>2$, we conclude the proof of Theorem \ref{th1.1}.

\section{Computing the density of the set $\O$} \label{Sec6}

Let $X>0$ be a large number and $\O(X)=\{M\leq X: M\in\O \}$. We first prove the lower bound for $|\O(X)|$. To do this, we consider the set $\O_1(X)$ of sufficiently large positive integers $X^{1/2}<M\leq X$ such that

\begin{itemize}
	
\item[(i)] $M$ is square-free;
	
\item[(ii)] $0.9\log\log X < \o(M) < 1.05\log\log X$;	
	
\item[(iii)] $M$ has the representation $M=Am$, where $P^-(m)>(\log X)^{C_0}$ and \\ $3\leq A\leq0.5(\log X)^{0.1}$.
\end{itemize}

Clearly, $\O_1(X)\subseteq \O(X)$. We are going to show that $|\O_1(X)|= (\eta+o(1))X$, where $\eta=e^{-\g}\frac{3}{5\pi^2C_0}$ and $\gamma$ is the Euler constant. It is well-known that the number of $M\leq X$ violating (ii) is $O(X(\log X)^{-c})$ for some absolute constant $c>0$ (see, for example, \cite[Exercise 04]{HT}). Further, let $z=(\log X)^{C_0}$ and $V(X,z)$ be the set of square-free $m\leq X$ such that $P^-(m)>z$. We are aiming to prove that
\begin{equation}\label{6.1}
|V(X,z)|=\frac{e^{-\gamma}X}{\log z}+O\left(\frac{X}{(\log z)^2}\right). 
\end{equation}
Set $P(z)=\prod_{p\leq z}p$. Obviously, $V(X) = V_1\setminus V_2$, where 
$$
V_1=\{m\leq X: (m,P(z))=1\}
$$
and
$$
V_2=\{m\leq X: \exists p>z \mbox{ with } p^2|m \}.
$$
Standard sieve methods (see, for example, \cite[Exercise 9.1.9 and Theorem 9.1.3]{Mur}) yield that, for our choice of $z$, 
\begin{equation*} 
|V_1| = \frac{e^{-\g}X}{\log z}+O\left(\frac{X}{(\log z)^2}\right); 
\end{equation*}
also we have
\begin{equation*}
|V_2|\leq \sum_{p>z}\left\lfloor\frac{X}{p^2}\right\rfloor \ll Xz^{-1}.
\end{equation*}
These two estimates imply (\ref{6.1}). 

Now for any $A\leq 0.5(\log X)^{0.1}$ we obtain from (\ref{6.1})
$$
|V(X/A,z)|=\frac{e^{-\gamma}X}{C_0A\log\log X}+O\left(\frac{X}{A(\log\log X)^2}\right).
$$
Let $\mu$ denote the M\"obius function. It is well-known that $\sum_{A\leq y}\mu^2(A)=\frac{6}{\pi^2}y+O(y^{1/2})$; by partial summation we get 
$$
\sum_{3\leq A\leq y}\frac{\mu^2(A)}{A}=\frac{6}{\pi^2}\log y+O(1).
$$ 
Thus
\begin{multline*}
|\O_1(X)| = \sum_{3\leq A\leq 0.5(\log X)^{0.1}} \mu^2(A)|V(X/A,z)|+O\left(\frac{X}{(\log X)^c}\right)\\ 
=\frac{e^{-\g}X}{C_0\log\log X}\sum_{3\leq A\leq0.5(\log X)^{0.1}}\frac{\mu^2(A)}{A}+o(X)=(\eta+o(1))X.
\end{multline*}

To establish the upper bound on $|\O(X)|$, we consider the set $\O_2(X)$ of positive integers $X^{1/2}<M\leq X$ such that

\begin{itemize}
	
\item[(i')] $M$ is square-free;
	
\item[(ii')] $0.85\log\log X < \o(M) < 1.1\log\log X$;	
	
\item[(iii')] $M$ has the representation $M=Am$, where $P^-(m)>(0.5\log X)^{C_0}$ and \\ $3\leq A\leq(\log X)^{0.1}$.

\end{itemize}
Arguing as before, we see that $|\O_2(X)|=(\eta+o(1))X$. Finally, any $M\in\O(X)$ with $M>X^{1/2}$ belongs to $\O_2(X)$, and thus $|\O(X)|\leq(\eta+o(1))X$. This completes the proof.

\section{Proof of the identity (\ref{1.3})} \label{Sec7}

We have
\begin{equation}\label{7.1}
{M \choose k}s(k)=\sum_{|U|=k}S(U)=\frac{1}{k}\sum_{x\in\Z/M\Z}\sum_{\substack{|U|=k\\U\ni x}}S(U)=\frac{M}{k}\sum_{\substack{|U|=k\\U\ni 0}}S(U).
\end{equation}
Setting $\A_{M,k}=\left\{\overline{x}=(x_1,\ldots,x_k)\in \N^k : \sum_{i=1}^kx_i=M\right\}$; we have $|\mathcal{A}_{M,k}|={M-1 \choose k-1}$. Note that a set $U=\{0=u_1<u_2<\ldots<u_k<M\}\subseteq \Z/M\Z$ containing $0$ is uniquely determined by a $k$-tuple $\overline{x}\in \A_{M,k}$ of the distances between its elements. Then by (\ref{7.1}) and the symmetry
$$
{M-1 \choose k-1}s(k)=\sum_{\substack{|U|=k\\U\ni 0}}S(U) = \sum_{\overline{x}\in\A_{M,k}}\sum_{i=1}^k{x_i^2}=k\sum_{\overline{x}\in\A_{M,k}}{x_1^2}.
$$
Further, the number of $\overline{x}\in\A_{M,k}$ with $x_1=a$ (here $a=1,...,M-k+1$) is equal to $|\A_{M-a,k-1}|={M-a-1 \choose k-2}$. Thus
\begin{equation}\label{s(k)2}
{M-1 \choose k-1}s(k)=k\sum_{\nu=1}^{M-k+1}a^2{M-1-a \choose k-2}.
\end{equation}
It remains to compute the latter sum. By induction on $v$ it is easy to prove the identities (first (\ref{7.3}), then using it, (\ref{7.4}), and then (\ref{7.5}))
\begin{equation}\label{7.3}
\sum_{a=0}^v{u+v-a \choose u} = {u+v+1 \choose u+1},
\end{equation}
\begin{equation}\label{7.4}
\sum_{a=1}^va{u+v-a \choose u} = {u+v+1 \choose u+2},
\end{equation}
\begin{equation}\label{7.5}
\sum_{a=1}^va^2{u+v-a \choose u} = 2{u+v+1 \choose u+3}+{u+v+1 \choose u+2}.
\end{equation}
Using (\ref{7.5}) for $u=k-2$ and $v=M-k+1$, we get from (\ref{s(k)2}) 
$$
s(k)=k\frac{2{M \choose k+1}+{M \choose k}}{{M-1 \choose k-1}}=M\frac{2M-k+1}{k+1}\,,
$$
as desired.

\smallskip 

\textbf{Acknowledgements.} This work was supported by the Russian Science Foundation under grant no.19-11-00001, https://rscf.ru/en/project/19-11-00001. The author would like to thank Sergei Konyagin and Yuri Malykhin for helpful discussions, and Pavel Sobyanin and Konstantin Olmezov for computational work concerned with the values $f_A(0.5)$ for small $A$. The author is also grateful to the anonymous referees for many suggestions which improved the quality of the paper.

\end{document}